\newcommand{\C}{\mathbb{C}}
\newcommand{\Q}{\mathbb{Q}}
\newcommand{\R}{\mathbb{R}}
\newcommand{\cE}{\mathcal{E}}
\newcommand{\cL}{\mathcal{L}}
\newcommand{\cX}{\mathcal{X}}
\DeclareMathOperator{\MA}{MA}
\DeclareMathOperator{\Ric}{Ric}
\DeclareMathOperator{\DF}{DF}
\DeclareMathOperator{\DING}{Ding}
\DeclareMathOperator{\Ding}{Ding}
\DeclareMathOperator{\Aut}{Aut}
\DeclareMathOperator{\PSH}{PSH}
\numberwithin{equation}{section}       
\newtheorem{prop} {Proposition} [section]
\newtheorem{thm}[prop] {Theorem} 
\newtheorem{lem}[prop] {Lemma}
\newtheorem{prop-def}[prop]{Proposition-Definition}
\newtheorem*{thmA}{Theorem A} 
\newtheorem*{thmB}{Theorem B} 
\newtheorem*{thmC}{Theorem C} 
\newtheorem*{thmD}{Theorem D} 
\newtheorem*{corA}{Corollary A}
\theoremstyle{remark}
\newtheorem{theorem}{Theorem}[section]
\theoremstyle{definition}
\newtheorem{definition}[theorem]{Definition}
\newtheorem{remark}[theorem]{Remark}
\newtheorem{conjecture}[theorem]{Conjecture}
\begin{document}

\title{Coupled K\"ahler-Einstein metrics}
\author{Jakob Hultgren and David Witt Nystr\"om}
\maketitle

\begin{abstract}
We propose new types of canonical metrics on K\"ahler manifolds, called coupled K\"ahler-Einstein metrics, generalizing K\"ahler-Einstein metrics. We prove existence and uniqueness results in the cases when the canonical bundle is ample and when the manifold is K\"ahler-Einstein Fano. In the Fano case we also prove that existence of coupled K\"ahler-Einstein metrics imply a certain algebraic stability condition, generalizing $K$-polystability.
\end{abstract}

\section{Introduction}


Given a compact K\"ahler manifold $X$ and K\"ahler class $\alpha$ on $X$, the problem of finding a canonical metric in $\alpha$ was proposed by Calabi \cite{Calabi1, Calabi2}. In particular he suggested looking for metrics of constant scalar curvature. In the special case when $\alpha = \pm c_1(X)$, where $c_1(X)$ is the first Chern class of $X$, this coincides with K\"ahler-Einstein metrics, defined by the fact that they are proportional to their Ricci tensor. 
It was recently showed by Chen, Donaldson and Sun \cite{CDS} (see also \cite{CSW}, \cite{DS} and \cite{Tian15}) that existence of K\"ahler-Einstein metrics is equivalent to a certain algebraic stability condition originally proposed by Tian \cite{Tian97}: $K$-stability. The corresponding conjecture for constant scalar curvature metrics, relating existence to a similar algebraic stability condition is still open. Some of the complexity of the problem is illustrated by the fact that even on manifolds admitting a K\"ahler-Einstein metric there might be K\"ahler classes not containing constant scalar curvature metrics (see \cite{Ross}). In this paper we propose new types of canonical metrics generalizing K\"ahler-Einstein metrics. 
\begin{definition}
We say that a $k$-tuple of K\"ahler metrics $(\omega_i)_{1\leq i\leq k}$ is coupled K\"ahler-Einstein (cKE) if
\begin{equation} \Ric \omega_1 = \ldots = \Ric \omega_k = \pm \sum \omega_i. \label{eq:CKE} \end{equation}
\end{definition}
Note that if $k=1$, then $(\omega_1)$ is coupled K\"ahler-Einstein if and only if $\omega_1$ is a K\"ahler-Einstein metric. Recall also that if $\theta$ is a $(1,1)$-current on $X$, then a K\"ahler metric $\omega$ is a $\theta$-twisted K\"ahler-Einstein metric if 
$$ \Ric \omega = \omega + \theta. $$
Consequently, $(\omega_i)$ is a coupled K\"ahler-Einstein if, for each $i$, $\omega_i$ is a $\sum_{j\not= i} \omega_j$-twisted K\"ahler-Einstein metric.

Equation \eqref{eq:CKE} implies the cohomological condition
$$ c_1(X) = \pm\sum [\omega_i]. $$ 
A $k$-tuple of K\"ahler classes $(\alpha_i)_{1\leq i\leq k}$ such that 
$$ \pm c_1(X) = \sum \alpha_i$$
will be called a \emph{decomposition of $\pm c_1(X)$}.

\begin{thmA}
Assume that $K_X$ is ample and let $(\alpha_i)$ be a decomposition of $-c_1(X)$. Then there exists a unique coupled K\"ahler-Einstein $k$-tuple $(\omega_i)$ such that $([\omega_i])=(\alpha_i)$.
\end{thmA}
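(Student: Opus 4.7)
The plan is to reduce \eqref{eq:CKE} (with the negative sign) to a coupled system of complex Monge-Amp\`ere equations on K\"ahler potentials, and then solve it by the continuity method along the lines of Aubin--Yau's treatment of the classical case $k=1$. Fix smooth K\"ahler references $\omega_{0,i}\in\alpha_i$; since $\sum_i[\omega_{0,i}]=-c_1(X)$, the $dd^c$-lemma produces a smooth volume form $\Omega$ on $X$ satisfying $-\Ric\Omega=\sum_i\omega_{0,i}$. Writing $\omega_i=\omega_{0,i}+dd^c\varphi_i$ and integrating the Ricci identity, the coupled K\"ahler-Einstein equation becomes
\begin{equation*}
(\omega_{0,i}+dd^c\varphi_i)^n=c_i\,e^{\sum_j\varphi_j}\,\Omega,\qquad i=1,\ldots,k,
\end{equation*}
with positive constants $c_i$ fixed by the volume constraints $\int\omega_i^n=\alpha_i^n$.

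For uniqueness I would analyze the functional
$$\mathcal{F}(\varphi_1,\ldots,\varphi_k)=-\sum_i E^{\omega_{0,i}}(\varphi_i)+\log\int_X e^{\sum_j\varphi_j}\,\Omega,$$
whose Euler-Lagrange equation recovers the system above. Each $-E^{\omega_{0,i}}$ is convex along affine paths in $\varphi_i$ (standard for the Aubin-Yau energy), and $\log\int e^{(\cdot)}\,\Omega$ is convex by H\"older's inequality, so $\mathcal{F}$ is convex. A direct Hessian computation shows strict convexity except along the ``gauge'' action $\varphi_i\mapsto\varphi_i+a_i$ by constants, which leaves every $\omega_i$ unchanged. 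This gives uniqueness of the tuple $(\omega_i)$.

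For existence I would deform through
$$(\omega_{0,i}+dd^c\varphi_i^t)^n=e^{t\sum_j\varphi_j^t-F_i}\,\Omega,\qquad t\in[0,1],$$
where $F_i$ is chosen so that at $t=0$ Yau's solution of the Calabi conjecture produces $\varphi_i^0$, while $t=1$ reproduces the desired system. Openness follows from the implicit function theorem: the linearization $\mathcal{L}_t(\dot\varphi)_i=\Delta_{\omega_i^t}\dot\varphi_i-t\sum_j\dot\varphi_j$ is elliptic and, by the convexity computation of Step 2, has only the trivial gauge kernel. Closedness requires a priori estimates; the $C^2$ bound follows equation-by-equation from Yau's Laplacian estimate, $C^{2,\alpha}$ from Evans-Krylov-Trudinger and higher regularity from Schauder bootstrap, provided one first has uniform $C^0$ control.

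The main obstacle I expect is precisely this uniform $C^0$ bound. The coupling term $\sum_j\varphi_j^t$ mixes all $k$ potentials, so a maximum-principle argument at $\sup_X\varphi_{i_0}^t$ only controls $t\sum_j\varphi_j^t$ at that single point, not the individual potentials. To extract individual bounds one needs to iterate such estimates across the $k$ equations after fixing a gauge normalization, or equivalently exploit coercivity of $\mathcal{F}$ modulo the gauge action. Because the sign in the $K_X$-ample case makes the coupling stabilizing (the right-hand side $e^{\sum_j\varphi_j}\Omega$ grows when the potentials grow), one should be able to close the argument in parallel with Aubin's for $k=1$.
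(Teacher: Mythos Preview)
Your uniqueness argument---convexity of the Ding-type functional along affine paths, with strict convexity modulo additive constants---is exactly what the paper does (Lemma~\ref{lemma:Eaffine}, Lemma~\ref{lemma:dingbasic}, and the second half of the proof in Section~\ref{sec:kxample}); the only cosmetic discrepancy is that the paper normalizes the energy terms by $1/|\alpha_i|$ so that the Euler--Lagrange equation matches \eqref{eq:MA}. For existence, however, you take a genuinely different route. The paper uses the direct method of the calculus of variations: it shows $D$ is lower semicontinuous and coercive on $\prod_i\cE^1(\theta_i)$ (Lemma~\ref{lemma:coercivity}), extracts a minimizer by compactness (Lemma~\ref{lemma:compactness}), and then upgrades the weak minimizer to a smooth solution via the pluripotential regularity package of \cite{BBEGZ} (Theorem~\ref{thm:regularity}). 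Your continuity-method approach is a legitimate alternative---the paper itself remarks that Pingali \cite{Pingali} carried it out after the preprint appeared. The variational route is softer: it sidesteps the linearized analysis and the coupled $C^0$ estimate you correctly flag as the crux, at the price of importing weak-solution regularity. Your route stays in the smooth category throughout but must confront that $C^0$ bound; one way to close it is to note that the volume constraint $\int e^{t\sum_j\varphi_j}\Omega=\int\Omega$ and the sup--mean inequality for $(\sum_j\omega_{0,j})$-psh functions bound $\sup\sum_j\varphi_j$, after which each individual equation has $L^\infty$ right-hand side, Ko\l odziej-type estimates bound each oscillation, and the gauge normalization pins down the remaining constants.
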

Note that given a K\"ahler class $\alpha$ there is a natural family of decompostion of $\pm c_1(X)$, namely
$$ \{(t\alpha,\pm c_1(X) - t\alpha\} $$
where $t$ ranges over all positive real numbers such that $\pm c_1(X) - t\alpha>0$. Moreover, if $(t\alpha,\pm c_1(X)-\alpha)$ admits a coupled K\"ahler-Einstein metric $(\omega,\omega')$ then, putting $\omega_t=\omega/t$ gives a metric in $\alpha$. Assuming the corresponding equations are solvable, this opens up the possibility to associate a family $\{\omega_t\}\subset \alpha$ of canonical metrics to a K\"ahler class $\alpha$. Moreover, in contrast to constant scalar curvature metrics the metrics in $\{\omega_t\}$ satisfies a Ricci curvature bound:
\begin{equation} \pm\Ric \omega_t = \pm\Ric \omega =\omega + \omega' > t\omega_t. \label{eq:riccibound} \end{equation}
Note that if $X$ is K\"ahler-Einstein then the Calabi-Yau theorem gives a special metric in any class $\alpha$, namely the unique $\omega\in \alpha$ satisfying 
$$ \Ric \omega = \omega_{KE} $$
where $\omega_{KE}$ is the unique K\"ahler-Einstein metric on $X$. It is interesting to note that, formally, this arises as a limiting case of the family $\{\omega_t\}$. To see this, note that $\omega_t$ satisfies
$$ \Ric \omega_t = \Ric \omega = \Ric \omega' = \pm(t\omega_t+\omega'). $$
Letting $t$ tend to zero in this gives
$$ \Ric \omega_t = \Ric \omega' = \pm\omega'. $$

As a corollary of Theorem~A we get
\begin{corA}
Assume that $K_X$ is ample and let $\alpha$ be a K\"ahler class on $X$. Then $\alpha$ contains a canonical family of metrics $\{\omega_t, t\in(0,t_0)\}$ where $t_0=\sup\{t>0: -c_1(X) - t\alpha>0\}$. Moreover, these metrics satisfy
\begin{equation} \Ric \omega_t < -t\omega_t. \label{eq:ricciboundcor} \end{equation}
\end{corA}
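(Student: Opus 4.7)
The plan is to deduce Corollary A directly from Theorem A applied with $k=2$. Given a K\"ahler class $\alpha$ and any $t\in(0,t_0)$, the pair $(t\alpha,\, -c_1(X)-t\alpha)$ consists of two K\"ahler classes, where positivity of the second is precisely the defining condition for $t_0$, and their sum is $-c_1(X)$. Hence $(t\alpha,\, -c_1(X)-t\alpha)$ is a decomposition of $-c_1(X)$ in the sense introduced in the excerpt. Theorem A then furnishes a unique coupled K\"ahler-Einstein pair $(\omega,\omega')$ with $[\omega]=t\alpha$ and $[\omega']=-c_1(X)-t\alpha$, and I define
$$ \omega_t \= \omega/t \in \alpha. $$
Uniqueness in Theorem A makes this assignment canonical in $\alpha$ and $t$.

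It remains to verify the curvature estimate \eqref{eq:ricciboundcor}. The key observation is that the Ricci form is invariant under positive rescaling of the metric: for any constant $c>0$ one has $\Ric(c\,\omega)=-i\partial\bar\partial\log\det(cg)=-i\partial\bar\partial\log\det g=\Ric(\omega)$. Applying this with $c=1/t$ and using the coupled K\"ahler-Einstein equation \eqref{eq:CKE} for $(\omega,\omega')$ gives
$$ \Ric\omega_t=\Ric\omega=-(\omega+\omega')=-(t\omega_t+\omega'). $$
Since $\omega'$ is a K\"ahler metric, $\omega'>0$ as a $(1,1)$-form, and therefore $\Ric\omega_t=-t\omega_t-\omega'<-t\omega_t$, which is the claimed bound.

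There is essentially no obstacle here, since the content of the corollary beyond Theorem A amounts to the two elementary remarks already used in the excerpt just before the statement: the scaling behaviour of the Ricci form and the identification of the natural one-parameter family of decompositions of $-c_1(X)$ that can be built from a single K\"ahler class $\alpha$. The only minor point worth flagging is that the hypothesis $K_X$ ample is used only through Theorem A, and that the open interval $(0,t_0)$ is exactly the range on which both slots of the decomposition remain K\"ahler, so the family $\{\omega_t\}$ cannot be extended further by this construction.
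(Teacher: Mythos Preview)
Your proof is correct and follows essentially the same route as the paper: apply Theorem~A to the decomposition $(t\alpha,-c_1(X)-t\alpha)$, set $\omega_t:=\omega/t$, and use the scaling invariance of the Ricci form together with the coupled K\"ahler--Einstein equation to obtain $\Ric\omega_t=-(t\omega_t+\omega')<-t\omega_t$. The paper's proof is just a terser version of exactly this argument.
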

\begin{proof}
By Theorem~A the decomposition $(t\alpha,-c_1(X) - t\alpha)$ admits a coupled K\"ahler-Einstein tuple $(\omega,\omega')$. Putting $\omega_t=\omega/t\in \alpha$ we get \eqref{eq:ricciboundcor} as in \eqref{eq:riccibound}.
\end{proof}

Finally, we point out that in \cite{Ross} Ross exhibit classes $\alpha$ on K\"ahler manifolds $X$ such that $K_X$ is ample and $\alpha$ does not admit a constant scalar curvature metrics. This means that by Corollary~A there are manifolds with K\"ahler classes that does not admit constant scalar curvature metrics but that does admit families of canonical metrics as above. 

Moving on to the Fano case we present the following uniqueness result which, together with Theorem~A, ensures any coupled K\"ahler-Einstein k-tuple with respect to a certain decomposition of $\pm c_1(X)$ is essentially unique. 
\begin{thmB}
Assume $X$ is Fano and $(\omega_i)$ and $(\omega_i')$ are two coupled K\"ahler-Einstein $k$-tuples such that $([\omega_i])=([\omega_i'])$. Then there is $F\in \Aut_0(X)$ such that $(F^*\omega_i)=(\omega_i')$. In particular, if $\Aut(X)$ is discrete then any coupled K\"ahler-Einstein $k$-tuple representing $(\alpha_i)$ is unique. 
\end{thmB}

As expected, the existence question in the Fano case is complicated. The next theorem gives existence for a large family of decompositions on any K\"ahler-Einstein Fano manifold. Before we state it we fix the following terminology: A decomposition $(\alpha_i)$ of $\pm c_1(X)$ is called parallel if all $\alpha_i$ are proportional to $\pm c_1(X)$. It is clear that if $\omega_{KE}$ is a K\"ahler-Einstein metric in $\pm c_1(X)$ then for any collection of positive numbers $\lambda_i$ such that  $\sum_1^k \lambda_i = 1$ we have that $(\lambda_i\omega_{KE})$ is coupled K\"ahler-Einstein. 
\begin{thmC}
Assume that $X$ is Fano and K\"ahler-Einstein and that the automorphism group is discrete. Then for any decomposition $(\alpha_i)$ of $\pm c_1(X)$ which is close enough to being parallel there exists a unique coupled K\"ahler-Einstein $k$-tuple $(\omega_i)$ such that $([\omega_i])=(\alpha_i)$.
\end{thmC}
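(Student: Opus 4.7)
The plan is to apply the implicit function theorem at the explicit coupled K\"ahler-Einstein tuple $(\lambda_i\omega_{KE})$ associated to a parallel decomposition $\alpha_i=\lambda_i c_1(X)$ with $\sum_i\lambda_i=1$, perturbing the cohomology classes $(\alpha_i)$ slightly off parallel while simultaneously adjusting the $k$-tuple of K\"ahler potentials. First I reformulate the cKE system as a coupled complex Monge-Amp\`ere system: pick reference forms $\omega_i^0\in\alpha_i$ and a smooth volume form $\Omega$ chosen so that $\Ric\Omega=\sum_j\omega_j^0$, and write $\omega_i=\omega_i^0+dd^c\varphi_i$. The identity $\Ric\omega_i=\Ric\Omega-dd^c\log(\omega_i^n/\Omega)$ converts $\Ric\omega_i=\sum_j\omega_j$ into the system
\[
(\omega_i^0+dd^c\varphi_i)^n=C_i(\varphi)\,e^{-\sum_j\varphi_j}\,\Omega\qquad(1\le i\le k),
\]
where $C_i(\varphi)$ are the constants making both sides have total volume $\alpha_i^n$. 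With the parallel choice $\omega_i^0=\lambda_i\omega_{KE}$ and $\Omega=\omega_{KE}^n$ the tuple $\varphi_i\equiv 0$ is a solution, so the equation defines a smooth map $F\colon\mathcal V\times\mathcal U\to(C^{0,\alpha})^k$ vanishing there, with $\mathcal V$ a neighborhood of the parallel decomposition in a space of cohomological reference data and $\mathcal U$ a $C^{2,\alpha}$-neighborhood of zero in $k$-tuples of $\Omega$-mean-zero potentials.

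The essential step is to verify that $D_\varphi F$ at the parallel point is an isomorphism. A direct linearization yields
\[
D_\varphi F(u)_i=\tfrac{1}{\lambda_i}\Delta u_i+\sum_{j=1}^k u_j,
\]
where $\Delta$ is the Laplacian of $\omega_{KE}$. Decomposing each $u_i=c_i\phi$ with $-\Delta\phi=\mu\phi$ for $\mu>0$ in the spectrum of $-\Delta$, the operator acts on the coefficient vector as
\[
M_\mu=-\mathrm{diag}(\mu/\lambda_i)+\one\one^T.
\]
The matrix determinant lemma, together with $\sum_i\lambda_i=1$, gives
\[
\det M_\mu=(-1)^k\,\frac{\mu^{k-1}(\mu-1)}{\prod_i\lambda_i},
\]
so $M_\mu$ is singular precisely when $\mu=1$. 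By the Lichnerowicz-Matsushima theorem the first nonzero eigenvalue $\mu_1$ of $-\Delta$ on the K\"ahler-Einstein Fano manifold $X$ satisfies $\mu_1\ge 1$, with equality iff $X$ admits a nonzero holomorphic vector field. The discreteness of $\Aut(X)$ rules out equality and forces $\mu_1>1$ strictly; hence $\mu=1$ does not lie in the spectrum of $-\Delta$, every $M_\mu$ appearing in that spectrum is invertible, and $D_\varphi F$ is an isomorphism on the chosen Banach spaces of mean-zero potentials.

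The implicit function theorem then produces, for every $(\alpha_i)$ in some neighborhood of any given parallel decomposition, a tuple of potentials solving the coupled Monge-Amp\`ere system, and hence a cKE tuple $(\omega_i)$ with $([\omega_i])=(\alpha_i)$; the union of these neighborhoods over all parallel decompositions is the ``close enough to parallel'' open set of the statement, while uniqueness within the family is provided by Theorem~B. The principal difficulty is the spectral step: it is exactly the Lichnerowicz-Matsushima gap, made strict by the discreteness of $\Aut(X)$, that excludes the unique potential obstruction $\mu=1$ to invertibility of the linearization; if a nonzero holomorphic vector field were present, $D_\varphi F$ would acquire a genuine kernel along the corresponding $\mu=1$ eigenspace and the implicit function theorem would not apply.
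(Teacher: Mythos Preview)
Your argument follows the same overall strategy as the paper: apply the implicit function theorem at the explicit parallel solution $(\lambda_i\omega_{KE})$, compute the linearization of the coupled Monge--Amp\`ere system, and deduce invertibility from the Lichnerowicz--Matsushima bound $\mu_1\ge 1$, made strict by the discreteness of $\Aut(X)$. The only substantive difference is in how invertibility of the linearized operator is established. The paper performs an explicit change of variables, setting $v_1'=\sum_i v_i$ and $v_i'=v_i/\lambda_i-v_{i-1}/\lambda_{i-1}$ for $i>1$, which decouples the system into $(-\Delta+1)$ acting on $v_1'$ and $-\Delta$ acting on each $v_i'$ ($i>1$), together with $k-1$ scalar integral constraints; invertibility is then immediate from the spectral gap and standard elliptic theory. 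Your eigenspace-by-eigenspace reduction to the $k\times k$ matrix $M_\mu=-\mathrm{diag}(\mu/\lambda_i)+\one\one^T$ and the matrix determinant lemma formula $\det M_\mu=(-1)^k\mu^{k-1}(\mu-1)/\prod_i\lambda_i$ reach the same conclusion more compactly and make the role of the critical value $\mu=1$ completely transparent; the paper's change of basis is, in effect, a simultaneous triangularization of all the $M_\mu$.

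One small technical point: with the domain taken to be $\Omega$-mean-zero potentials, the target of $F$ must also be the mean-zero subspace of $(C^{0,\alpha})^k$ for $D_\varphi F$ to be an isomorphism, since (as your own linearization shows) its image lies in mean-zero functions. This is automatic if you define $F_i$ as the difference of the two normalized densities rather than the log of their ratio, because the constants $C_i(\varphi)$ are chosen precisely so that both sides have equal total mass; the paper handles the same bookkeeping differently, by leaving the potentials unnormalized and instead appending $k-1$ integral constraints $\int\phi_i\,\omega_0^n$ to the target.
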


For general Fano manifolds we will formulate an algebraic stability condition for decompositions of $c_1(X)$ that generalizes $K$-polystability and which we conjecture is equivalent to the existence of coupled K\"ahler-Einstein $k$-tuples. Similarily as for $K$-polystability this stability condition will be defined in terms of invariants of test configurations. We therefore recall the definition of a test configuration.
\begin{definition}
Let $L$ be an ample $\Q$-linebundle over a projective manifold $X$. A test configuration $(\cX,\cL,\pi,\rho)$ for $(X,L)$ is a scheme $\cX$, a flat surjective morphism $\pi:\cX\rightarrow \C$ and a relatively ample $\Q$-line bundle $\cL$ together with a $\C^*$-action $\rho$ on $\cL$, compatible with the standard $\C^*$-action on $\C$, such that $(\cX_1,\cL|_1)=(X,L)$. We will further assume that the total space $\cX$ is normal.
\end{definition}
Now, the Donaldson-Futaki invariant of a test configuration $(\cX,\cL)$ is defined in the following way. Let $X_0$ be the reduction of the central fiber of $\cX$, $N_k$ be the dimension of $H^0(X_0,k\cL|_{X_0})$ and $w_k$ be the weight of the $\C^*$-action on the complex line 
$$\det H^0(X_0,k\cL|_{X_0}).$$ 
After normalization, $w_k$ has an expansion in powers of $1/k$ and the Donaldson-Futaki invariant is minus the subleading coefficient of this expansion:
$$ \frac{w_k}{kN_k} = c_0 - \frac{1}{2k}\DF(\cX,\cL) + O\left(\frac{1}{k^2}\right). $$
A pair $(X,L)$ is said to be $K$-polystable if $\DF(\cX,\cL)\geq 0$ for all test configurations $(\cX,\cL)$ associated to $(X,L)$, with equality if and only if $(\cX,\cL)$ is isomorphic to a product test configuration.  

We will be interested in an alternative formula for the Donaldson-Futaki invariant due to Phong-Ross-Sturm \cite{PRS}. We will state it here in the special case when $L=-K_X$. If $(\cX,\cL)$ is a test configuration for $(X,-K_X)$, let $\eta$ be the line bundle over $\C$ given by
\begin{equation} \eta =  \frac{n}{(n+1)(-K_X)^n}\left(\langle\cL,\ldots,\cL\rangle-(n+1)\langle-K_{\cX/\C},\cL,\ldots,\cL\rangle\right) \label{eq:standardeta} \end{equation}
where $\langle\cdot,\ldots,\cdot\rangle$ denotes Deligne pairing. Then the Donaldson-Futaki invariant is the weight of the $C^*$-action on the central fiber of $\eta$,
$$ \DF(\cX,\cL)=w_0(\eta). $$
This is closely related to the formula for the Donaldson-Futaki invariant in terms of intersection numbers given in \cite{Odaka},\cite{Wang}. 

A decomposition of $c_1(X)$ will be called a $\Q$-decomposition if it is of the form $(c_1(L_i))$ for some $\Q$-line bundles $L_1,\ldots,L_k$ over $X$.
\begin{definition}\label{def:testconfiguration}
Let $(\alpha_i)$ be a $\Q$-decomposition of $c_1(X)$. Assume $(\cX,\cL)$ is a test configuration for $(X,-K_X)$ that satisfies the following
\begin{itemize}
    \item $\cL = \sum \cL_i$ for some $\Q$-linebundles $\cL_i, 1\leq i\leq k$, over $\cX$ such that, for each $i$, $(\cX,\cL_i)$ is a test configuration for $(X,L_i)$ where $L_i$ is a $\Q$-line bundle over $X$. 
    \item $c_1(L_i)=\alpha_i$ for all $i$.
\end{itemize}
We then say that the data $(\cX,(\cL_i))$ is a test configuration for $(\alpha_i)$.
\end{definition}

Given a test configuration $(\cX,(\cL_i))$ for a $\Q$-decomposition of $c_1(X)$ let $\eta$ be the $\Q$-line bundle over $\C$ given by
\begin{eqnarray}\label{eq:eta}
\eta = -\frac{1}{(n+1)}\sum_i \frac{\langle \cL_i,\ldots,\cL_i\rangle}{(\cL_i|_{x_1})^n} +\frac{\langle \sum_i \cL_i+K_{\cX/\C}, \sum_i \cL_i,\ldots,\sum_i \cL_i \rangle}{(-K_X)^n}
\end{eqnarray}

\begin{definition}\label{def:Kstability}
Let $(\cX,(\cL_i))$ be a test configuration for a $\Q$-decomposition of $c_1(X)$. We define the Donaldson-Futaki invariant $\DF(\cX,(\cL_i))$ of $(\cX,(\cL_i))$ as the weight of the $\C^*$-action on the central fiber of $\eta$ (defined in \eqref{eq:eta}). 

Moreover, we say that a $\Q$-decomposition of $c_1(X)$ is \emph{$K$-polystable} if for all associated test configurations $(\cX,(\cL_i))$
$$\DF(\cX,(\cL_i))\geq 0,$$ 
with equality if and only if $\cX$ is isomorphic to $X\times \C$. 
\end{definition}
Note that if $k=1$, then $L_1=-K_X$ and \eqref{eq:eta} reduces to
$$ -\frac{1}{(n+1)(-K_X)^n}\langle \cL_1,\ldots,\cL_1\rangle + \frac{\langle \cL_1+K_{\cX/\C},\cL_1,\ldots, \cL_1\rangle}{(-K_X)^n}. $$
By bilinearity this equals \eqref{eq:standardeta}, hence in this case the generalized Donaldson-Futaki invariant and $K$-polystability of Definition~\ref{def:Kstability} reduces to the standard definitions.
With respect to this, we prove
\begin{thmD}
Assume $X$ is Fano. Then any decomposition of $c_1(X)$ that admits a coupled K\"ahler-Einstein $k$-tuple is $K$-polystable.
\end{thmD}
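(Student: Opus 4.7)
The plan is to adapt Berman's proof that existence of a K\"ahler--Einstein metric implies $K$-polystability to the coupled setting, via a coupled Ding functional whose critical points are coupled K\"ahler--Einstein tuples. Fix smooth reference metrics $\omega_{0,i} \in \alpha_i$ and a smooth volume form $\mu_0$ on $X$, normalized so that $\Ric \mu_0 = \sum_i \omega_{0,i}$ (which is possible since $\sum_i \alpha_i = c_1(X)$). For smooth $k$-tuples of potentials $(\varphi_i)$ with $\omega_i \= \omega_{0,i} + dd^c\varphi_i > 0$, set
$$ \cD(\varphi_1,\ldots,\varphi_k) \= -\sum_i \frac{E_i(\varphi_i)}{V_i} - \log\int_X e^{-\sum_j \varphi_j}\mu_0, $$
where $E_i$ is the Monge--Amp\`ere energy relative to $\omega_{0,i}$ and $V_i \= \int_X \omega_{0,i}^n$. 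A direct variational computation shows that $(\varphi_i)$ is a critical point of $\cD$ if and only if $(\omega_i)$ is coupled K\"ahler--Einstein.

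Given a test configuration $(\cX,(\cL_i))$ for $(\alpha_i)$, I would apply the Phong--Sturm construction to each relatively ample $\Q$-line bundle $\cL_i$ to produce, after the change of variable $t = -\log|\tau|$, a bounded weak geodesic ray $t \mapsto \varphi_i(t)$ of $\omega_{0,i}$-psh potentials; the tuple $(\varphi_i(t))$ is then the natural geodesic ray attached to the coupled test configuration. Along such a ray each $t \mapsto E_i(\varphi_i(t))$ is affine (the defining property of weak Mabuchi geodesics), while by Berndtsson's subharmonicity theorem applied to the total potential $\sum_i \varphi_i$, which extends to a psh function on $X \times \mathbb{D}^*$, the function $t \mapsto -\log\int_X e^{-\sum_j \varphi_j(t)}\mu_0$ is convex. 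Hence $t \mapsto \cD(\varphi_1(t),\ldots,\varphi_k(t))$ is convex on $[0,\infty)$.

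The next step is to identify the asymptotic slope of $\cD$ along this ray with the generalized Donaldson--Futaki invariant of Definition~\ref{def:Kstability}. Each slope $\lim_{t\to\infty} E_i(\varphi_i(t))/t$ is given by the Phong--Ross--Sturm Deligne-pairing formula and contributes $-\tfrac{1}{n+1}\langle\cL_i,\ldots,\cL_i\rangle/V_i$; the slope of the entropy term equals $\langle\sum_i\cL_i + K_{\cX/\C},\,\sum_i\cL_i,\ldots,\sum_i\cL_i\rangle/(-K_X)^n$, via the twisted variant of Berman's entropy-slope formula, the twist $K_{\cX/\C}$ encoding the Ricci potential of $\mu_0$. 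Summing these contributions reproduces $w_0(\eta)$ with $\eta$ as in \eqref{eq:eta}, i.e.\ $\DF(\cX,(\cL_i))$.

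With these pieces in place the inequality is immediate: if a coupled K\"ahler--Einstein tuple representing $(\alpha_i)$ exists, its potentials are a critical point and, by convexity of $\cD$ along geodesics, a global minimizer, so the convex function $t \mapsto \cD(\varphi_i(t))$ has nonnegative asymptotic slope and $\DF(\cX,(\cL_i)) \geq 0$. The main obstacle is the equality case. Here $\DF = 0$ forces $\cD$ to be affine along the ray, and one must deduce $\cX \cong X \times \C$; this requires a strict-convexity statement for the coupled Ding functional --- identifying flat directions of Berndtsson's log-integral with holomorphic vector fields --- combined with Theorem~B, so as to conclude that the ray is generated by a one-parameter subgroup of $\Aut_0(X)$ and the test configuration is therefore a product.
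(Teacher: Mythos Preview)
Your overall strategy---convexity of the coupled Ding functional along weak geodesic rays attached to a test configuration, combined with criticality at the cKE point---is exactly the paper's, but there is a genuine gap in your slope computation and in the equality case.

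The asymptotic slope of $\cD$ along the ray does \emph{not} equal $\DF(\cX,(\cL_i))$. The affine slope of each $E_i$ does produce the Deligne-pairing term $-\tfrac{1}{n+1}\langle\cL_i^{n+1}\rangle/V_i$, but the slope of $-\log\int_X e^{-\sum_j\varphi_j(t)}\mu_0$ is \emph{not} the intersection number $\langle\sum_i\cL_i+K_{\cX/\C},(\sum_i\cL_i)^n\rangle/(-K_X)^n$. What you actually get (via Berman's formula for the weight of the $L^2$-metric on $\pi_*(\sum_i\cL_i+K_{\cX/\C})$, together with a Lelong-number correction) is the weight on the \emph{Ding} line bundle $\delta$ of \eqref{eq:delta}, i.e.\ the Ding invariant $\DING(\cX,(\cL_i))$. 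The inequality $\DF\ge\DING$ is a separate algebro-geometric step (Lemma~\ref{lemma:dfding}, which in turn rests on Berman's Lemma~3.10): one compares $\langle\sum_i\cL_i+K_{\cX/\C},(\sum_i\cL_i)^n\rangle/(-K_X)^n$ with $\pi_*(\sum_i\cL_i+K_{\cX/\C})$ and shows the former dominates the latter. Without this, your slope identity is false in general (it would force $\DF=\DING$ for every test configuration), and the nonnegativity of $\DF$ does not follow from convexity alone.

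The equality case is also not handled correctly. If $\DF=0$ you need two things: first, equality in $\DF\ge\DING$ forces $\cX$ to be $\Q$-Gorenstein with $\sum_i\cL_i\cong -K_{\cX/\C}$ (again Lemma~\ref{lemma:dfding}); second, $\DING=0$ together with nonnegativity of the Lelong number and of the slope of $\cD$ forces the curvature of the Ding metric to vanish near the central fibre, and then one invokes Berman's rigidity (Proposition~\ref{prop:dingmetric}, from his Proposition~3.3) to conclude $\cX\cong X\times\C$. Your proposed route---flat directions of Berndtsson's log-integral yield a holomorphic vector field, hence the ray on $X$ is induced by a one-parameter subgroup---only controls the geodesic on $X$ away from the central fibre; it does not by itself say anything about the scheme structure of $\cX_0$ or trivialise the test configuration. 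Theorem~B is about uniqueness of cKE tuples, not about triviality of test configurations, so it cannot close this gap.
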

Moreover, we make the following
\begin{conjecture}
Assume $X$ is Fano. Then any decomposition of $c_1(X)$ that is $K$-polystable admits a coupled K\"ahler-Einstein $k$-tuple. 
\end{conjecture}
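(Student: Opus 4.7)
The plan is to follow the variational approach of Berman-Boucksom-Jonsson in the coupled setting. First I introduce a \emph{coupled Ding functional}: fix reference K\"ahler forms $\omega_{0,i}\in\alpha_i$ and a smooth volume form $\mu_0$ on $X$ normalized so that $-\partial\bar\partial\log\mu_0 = \sum_i \omega_{0,i}$, and set on $\prod_i \cH(\alpha_i)$
\begin{equation*}
\cD(\varphi_1,\ldots,\varphi_k) \= -\sum_i E_i(\varphi_i) - \log \int_X e^{-\sum_j \varphi_j}\,\mu_0,
\end{equation*}
where $E_i$ denotes the Monge-Amp\`ere energy on $\alpha_i$. Varying in $\varphi_i$ produces the Euler-Lagrange equation $\omega_{\varphi_i}^n = c\, e^{-\sum_j \varphi_j}\mu_0$; applying $-\partial\bar\partial\log$ to both sides converts this into $\Ric\omega_{\varphi_i} = \sum_j \omega_{\varphi_j}$, so critical points of $\cD$ are precisely cKE tuples.

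Next I connect $\cD$ to the algebro-geometric side. Given a test configuration $(\cX,(\cL_i))$ for $(\alpha_i)$, one associates a weak geodesic ray $t\mapsto (\varphi_i^t)$ in $\prod_i \cH(\alpha_i)$ arising from the induced $S^1$-invariant metric. Deligne-pairing computations, essentially running those underlying Theorem~D in reverse, then yield a slope formula
\begin{equation*}
\lim_{t\to\infty}\frac{\cD(\varphi_1^t,\ldots,\varphi_k^t)}{t} = c\cdot \DF(\cX,(\cL_i)),
\end{equation*}
with $c>0$ universal and $\DF$ as in \eqref{eq:eta}. Thus $K$-polystability of $(\alpha_i)$ translates directly into nonnegativity of these non-Archimedean slopes, with equality only along product rays.

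The remaining task is to upgrade this non-Archimedean lower bound to coercivity of $\cD$ modulo $\Aut_0(X)$, and then deduce existence and regularity of a cKE tuple. The steps I would carry out are: (i) extend $\cD$ lower-semicontinuously to the product of finite-energy classes $\prod_i \cE^1(X,\omega_{0,i})$; (ii) build a non-Archimedean coupled Ding functional $\cD^\NA$ whose nonnegativity on non-product configurations encodes coupled $K$-polystability; (iii) establish a BBJ-type implication $\cD^\NA \ge 0 \Rightarrow \cD$ is coercive modulo $\Aut_0(X)$; (iv) extract a finite-energy minimizer of $\cD$ by compactness and upgrade to a smooth cKE tuple via a priori estimates adapted from the twisted K\"ahler-Einstein estimates already underlying Theorem~A.

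The hard part is step (iii). Even for $k=1$ it requires the full non-Archimedean pluripotential theory of Berman-Boucksom-Jonsson and, for merely polystable (as opposed to uniformly stable) configurations, the equivariant refinements of Li and others. In the coupled setting one must first isolate the correct \emph{reduced} or \emph{uniform} version of coupled $K$-polystability --- naturally phrased in terms of multi-parameter filtrations on $\bigoplus_{\vec m} H^0(X, m_1 L_1 + \cdots + m_k L_k)$ --- and then generalize the convex-analytic BBJ arguments to this multi-parameter picture. This is nontrivial because the $\cL_i$ need not be commensurable, so one cannot reduce the problem to classical $K$-polystability of the single class $\sum_i \alpha_i = c_1(X)$: the cKE system is genuinely stronger than the single K\"ahler-Einstein equation, and the corresponding convex functional on the non-Archimedean side is genuinely a functional on a product space rather than on a single space of valuations.
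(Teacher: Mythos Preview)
The paper does \emph{not} prove this statement: it is explicitly formulated as a conjecture, left open, and only the converse direction (existence of a cKE tuple implies $K$-polystability) is established as Theorem~D. So there is no ``paper's own proof'' to compare against.

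Your proposal is a coherent research outline along the lines of the Berman--Boucksom--Jonsson variational/non-Archimedean program, and the coupled Ding functional you introduce is exactly the one the paper uses in Section~\ref{sec:ding}. However, what you have written is not a proof but a strategy, and you correctly locate the obstruction yourself: step~(iii), the passage from non-Archimedean semipositivity to coercivity of $\cD$ modulo $\Aut_0(X)$, is the entire content of the conjecture. For $k=1$ this is the Yau--Tian--Donaldson theorem, whose original proof (Chen--Donaldson--Sun) proceeds by partial $C^0$-estimates and Gromov--Hausdorff limits rather than by the purely variational route you sketch; the variational route was only completed later and still requires substantial non-Archimedean machinery together with a delicate uniform-versus-polystable analysis. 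None of that is available in the coupled setting, and your remark that the $\cL_i$ are not commensurable correctly identifies why one cannot simply reduce to the $k=1$ case on $-K_X$.

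Two smaller technical points. First, your slope formula is not quite right as stated: the asymptotic slope of $\cD$ along the geodesic ray attached to $(\cX,(\cL_i))$ recovers the coupled \emph{Ding} invariant $\DING(\cX,(\cL_i))$ (up to a Lelong-number correction, cf.\ Lemma~\ref{lemma:pshlimit}), not $\DF(\cX,(\cL_i))$; the two are related by the inequality $\DF\ge\DING$ of Lemma~\ref{lemma:dfding}, which goes the wrong way for the implication you want. Second, steps~(i) and~(iv) of your outline are already carried out in the paper (Section~\ref{sec:variationalformulation} and Theorem~\ref{thm:regularity}), so the genuine open content is confined to (ii)--(iii).
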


Recall that any K\"ahler class $\alpha$ on a Fano manifold comes with a family of decompositions $(t\alpha,c_1(X)-t\alpha)$ on $c_1(X)$. A subset of these will be $K$-polystable.  Conjecturally, these admit coupled K\"ahler-Einstein tuples that, as in Corolllary~A, determines a canonical family of metrics $\{\omega_t\}\subset \alpha$. Moreover, by \eqref{eq:riccibound} these would satisfy the lower Ricci curvature bounds
$$ \Ric \omega_t \geq t\omega_t. $$

\subsection{Outline}
Theorem~A is proved using a variational principle where coupled K\"ahler-Einstein metrics arise as the minimizers of a certain generalized Ding functional (see Section~\ref{sec:ding}). This is an adaptation of the variational approach to Monge-Amp\`ere equations developed by Berman, Boucksom, Guedj and Zeriahi \cite{BBGZ}. This variational principle is set up in Section~\ref{sec:variationalformulation} and the proof of Theorem~A is given in Section~\ref{sec:kxample}. To prove Theorem~B we combine the variational principle set up in Section~\ref{sec:variationalformulation} together with a convexity theorem by Berndtsson \cite{Berndtsson} (see Section~\ref{sect:uniqueness}). In Section~\ref{sect:Kstability}, we follow the approach developed by Berman in \cite{Berman}, to prove Theorem~D. In the final section we use the continuity method (see \cite{Aubin,Yau77,Yau78}) to prove Theorem~C.

\subsection{Acknowledgments}
We thank Robert Berman, Bo Berndtsson, Tam\'as Darvas, Mattias Jonsson, Valentino Tosatti and Vamsi Pritham Pingali for discussions relating to this work. The first named author was supported by the European Research Council and the second named author was supported by the Swedish Research Council.   

\section{Variational Formulation}
\label{sec:variationalformulation}
In this section we use the framework in \cite{BBGZ} to give a variational formulation for coupled K\"ahler-Einstein $k$-tuples. 

\subsection{Equivalent System of Monge-Amp\`ere Equations}

In this paper $c_1(X)$ will always be assumed to have a sign, and we will let $\lambda$ denote that sign, i.e. $X$ is either Fano and $\lambda:=1$, or else $K_X$ is ample in which case we set $\lambda:=-1$. 

Let $(\alpha_i)$ be a decomposition of $\pm c_1(X)$.  Fix one K\"ahler metric $\theta_i$ in each class $\alpha_i$. By Yau's theorem we can find a K\"ahler metric $\omega_0$ such that $\omega_0^n$ is a probability measure and 
$$ \Ric \omega_0 =\lambda \sum_i \theta_i. $$
For each $i$, let $\PSH(\theta_i)$ be the space of $\theta_i$-plurisubharmonic functions on $X$, in other words functions $\phi$ which are upper semicontinuous and locally integrable and which satisfies 
$$ dd^c \phi + \theta_i \geq 0. $$
We will consider these spaces equipped with the $L^1$-topology.

The Monge-Amp\`ere measure of a smooth $\theta_i$-plurisubharmonic function $\phi$ is given by
$$ \MA_{\theta_i}(\phi) := (dd^c\phi + \theta_i)^n. $$
An application of the maximum principle gives 
\begin{lem}
\label{lemma:MAeq}
A $k$-tuple of K\"ahler metrics $(dd^c\phi_i+\theta_i)$ is coupled K\"ahler-Einstein if and only if 
\begin{equation} \label{eq:MA}
\frac{1}{|\alpha_1|}\MA_{\theta_1}(\phi_1) = \ldots = \frac{1}{|\alpha_k|}\MA_{\theta_k}(\phi_k) = \frac{e^{-\lambda\sum \phi_i}\omega_0^n}{\int_X e^{-\lambda\sum \phi_i}\omega_0^n}
\end{equation}
where $|\alpha_i|:=\int_X \alpha^n$. 
\end{lem}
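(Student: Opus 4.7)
The plan is to translate the curvature condition \eqref{eq:CKE} into a system of Monge--Amp\`ere equations for the potentials $\phi_i$. The key ingredient is the standard identity
$$\Ric \omega - \Ric \omega_0 = -dd^c \log(\omega^n/\omega_0^n),$$
valid for any two K\"ahler forms on $X$. Applied with $\omega=\omega_i=\theta_i+dd^c\phi_i$ and the chosen reference form $\omega_0$, which satisfies $\Ric\omega_0=\lambda\sum_j\theta_j$, the coupled K\"ahler--Einstein equation $\Ric \omega_i=\lambda\sum_j\omega_j=\lambda\sum_j(\theta_j+dd^c\phi_j)$ reduces to
$$dd^c\!\left(\log(\omega_i^n/\omega_0^n)+\lambda\sum_j\phi_j\right)=0,\qquad i=1,\ldots,k.$$
The function inside is thus pluriharmonic on the compact K\"ahler manifold $X$, hence constant by the maximum principle. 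Denote this constant by $c_i$.

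Exponentiating gives $\MA_{\theta_i}(\phi_i)=e^{c_i}e^{-\lambda\sum_j\phi_j}\omega_0^n$. Integrating over $X$ and using $\int_X\MA_{\theta_i}(\phi_i)=\int_X\alpha_i^n=|\alpha_i|$ pins down $e^{c_i}=|\alpha_i|/Z$ with $Z:=\int_X e^{-\lambda\sum_j\phi_j}\omega_0^n$, and dividing by $|\alpha_i|$ yields exactly \eqref{eq:MA}; in particular the left-hand sides agree for all $i$ because they all equal a single $i$-independent measure. Conversely, starting from \eqref{eq:MA} one reads off $\omega_i^n/\omega_0^n=(|\alpha_i|/Z)\,e^{-\lambda\sum_j\phi_j}$, and applying $-dd^c\log$ together with the identity above recovers $\Ric\omega_i=\lambda\sum_j\omega_j$ for every $i$, which is \eqref{eq:CKE}.

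No step is expected to pose a genuine difficulty: the argument is the standard derivation of the Monge--Amp\`ere equation for a K\"ahler--Einstein potential, carried out simultaneously for $k$ smooth potentials which are coupled only through the common right-hand side $e^{-\lambda\sum_j\phi_j}\omega_0^n$. The only mildly delicate bookkeeping is the appearance of the normalization factors $|\alpha_i|$, which record that the $\omega_i$ represent a priori distinct cohomology classes with different total volumes.
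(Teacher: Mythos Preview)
Your argument is correct and is precisely the standard derivation the paper has in mind when it writes ``an application of the maximum principle gives'' before stating the lemma without further detail. You have simply made explicit the reduction via $\Ric\omega-\Ric\omega_0=-dd^c\log(\omega^n/\omega_0^n)$ and the normalization by total volume that the paper leaves to the reader.
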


\subsection{Monge-Amp\`ere Energy}\label{sect:energy}
The \emph{Monge-Amp\`ere energy} of a smooth $\theta_i$-plurisub- harmonic functions $\phi$ is 
$$ E_{\theta_i}(\phi) = \frac{1}{n+1}\sum_{0\leq j\leq n} \int\phi (dd^c\phi+\theta_i)^j \wedge \theta_i^{n-j}. $$
It is clear that if $C\in \R$ then $E_{\theta_i}(\phi+C)=E_{\theta_i}(\phi)+C|\alpha_i|$. If $\phi$ is strictly $\theta_i$-plurisubharmonic and $v$ is a twice differentiable function on $X$, then $\phi+tv$ is also $\theta_i$-plurisubharmonic for small $t$ and one can verify that 
\begin{equation} 
\label{eq:Ederivative}
\left.\frac{d}{dt}\right|_{t=0} E_{\theta_i}(\phi+tv) = \int_Xv\MA(\phi) 
\end{equation}
Differentiating again shows that $E_{\theta_i}(\phi+tv)$ is concave in $t$ and integrating by parts gives the following:
\begin{lem}
\label{lemma:Eaffine}
If $\phi$ and $v$ are smooth functions on $X$ such that $\phi$ is strictly $\theta_i$-plurisub-harmonic and 
$$\left.\frac{d^2}{dt^2}\right|_{t=0}E_{\theta_i}(\phi+tv)=0$$
then $v$ is constant.
\end{lem}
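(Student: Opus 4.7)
The plan is to compute the second derivative directly and then use a standard integration-by-parts argument to reveal a nonnegative integrand that must therefore vanish identically. Set $\omega_t := dd^c(\phi+tv) + \theta_i = \omega_\phi + t\,dd^c v$, which is a Kähler form for $t$ in a neighborhood of $0$ since $\phi$ is strictly $\theta_i$-plurisubharmonic. From \eqref{eq:Ederivative} applied at every $t$ small enough, we have
$$\frac{d}{dt} E_{\theta_i}(\phi+tv) = \int_X v\, \omega_t^n.$$

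Differentiating once more in $t$ and evaluating at $t=0$ gives
$$\left.\frac{d^2}{dt^2}\right|_{t=0} E_{\theta_i}(\phi+tv) = n\int_X v\, dd^c v \wedge \omega_\phi^{n-1}.$$
Here I use that $\omega_\phi^{n-1}$ is $d$-closed, which lets Stokes' theorem be applied. Integrating by parts on the compact manifold $X$ (using $d = \partial + \bar\partial$ and $d^c = \frac{i}{2\pi}(\bar\partial - \partial)$, and the closedness of $\omega_\phi^{n-1}$) converts this to
$$\left.\frac{d^2}{dt^2}\right|_{t=0} E_{\theta_i}(\phi+tv) = -n\int_X dv \wedge d^c v \wedge \omega_\phi^{n-1}.$$

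Now the key point is the pointwise identity $dv \wedge d^c v \wedge \omega_\phi^{n-1} = \tfrac{1}{n}\,|\partial v|^2_{\omega_\phi}\, \omega_\phi^n$ (up to the fixed normalization of $d^c$), which is a nonnegative $(n,n)$-form since $\omega_\phi$ is a Kähler metric. Hence the vanishing hypothesis
$$\int_X |\partial v|^2_{\omega_\phi}\, \omega_\phi^n = 0$$
forces $\partial v \equiv 0$ on $X$, i.e. $v$ is holomorphic as a complex-valued function. Since $X$ is a compact complex manifold (implicitly assumed connected), any global holomorphic function is constant, so $v$ is constant.

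The only real subtlety is the integration by parts, which relies on $\omega_\phi^{n-1}$ being smooth and closed; both hold because $\phi$ and $\theta_i$ are smooth. Everything else is routine, so I do not expect a substantive obstacle.
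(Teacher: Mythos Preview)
Your argument is correct and follows exactly the route the paper indicates (``differentiating again\ldots and integrating by parts''): you compute the second variation as $-n\int_X dv\wedge d^c v\wedge\omega_\phi^{n-1}$, recognize the integrand as $|\partial v|^2_{\omega_\phi}\,\omega_\phi^n$ up to a positive constant, and conclude. One small terminological slip: $\partial v=0$ means $v$ is \emph{anti}-holomorphic, not holomorphic; the conclusion is unaffected since $v$ is real-valued (so $\bar\partial v=\overline{\partial v}=0$ as well, giving $dv=0$), or alternatively since global anti-holomorphic functions on compact connected $X$ are also constant.
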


The Monge-Amp\`ere energy has an extension to $\PSH(\theta_i)$ 
given by
$$ E_{\theta_i}(\phi) = \inf \{E_{\theta_i}(\psi): \psi\in \PSH(\theta_i) \textnormal{ smooth} \}. $$
We still have $E_{\theta_i}(\phi+C)=E_{\theta_i}(\phi)+C|\alpha_i|$ and By Proposition~2.1 in \cite{BBGZ} concavity is preserved. 

The \emph{finite energy class} $\cE^1(\theta_i)$ is the subset of $\PSH(\theta_i)$ where $E_{\theta_i}$ is finite. By concavity of $E_{\theta_i}$ $\cE^1(\theta_i)$ is convex. Moreover, $E_{\theta_i}$ is an exhaustion function of $\cE(\theta_i)$ in the sense that  
\begin{lem}[Lemma 2.6 in \cite{BBGZ}]
\label{lemma:compactness}
Let $\theta$ be a K\"ahler metric on a compact complex manifold $X$ and $C\in \R$. Then 
$$\{\phi\in \PSH(\theta):  \sup \phi \leq 0, E_{\theta}(\phi)\geq -C \}$$
is compact (in the $L^1$-topology).
\end{lem}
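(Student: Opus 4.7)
The plan is to realize the set
$$ K := \{\phi \in \PSH(\theta) : \sup \phi \leq 0,\ E_\theta(\phi) \geq -C\} $$
as a closed and $L^1$-bounded subset of $\PSH(\theta)$, and then invoke the classical fact that any $L^1$-bounded family of $\theta$-plurisubharmonic functions on a compact K\"ahler manifold is relatively compact in the $L^1$-topology — a standard consequence of sub-mean-value estimates for quasi-psh functions combined with weak compactness in $L^1$.

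For $L^1$-boundedness, the key estimate is
$$ E_\theta(\phi) \leq \int_X \phi\,\theta^n \qquad \text{for every } \phi \in \PSH(\theta) \text{ with } \phi \leq 0. $$
For smooth $\phi$, write $(n+1)E_\theta(\phi) = \sum_{j=0}^n \int_X \phi\,(dd^c\phi+\theta)^j\wedge\theta^{n-j}$ and note that for $j \geq 1$, integration by parts gives
$$ \int_X \phi\,(dd^c\phi+\theta)^j\wedge\theta^{n-j} - \int_X\phi\,(dd^c\phi+\theta)^{j-1}\wedge\theta^{n-j+1} = -\int_X d\phi\wedge d^c\phi\wedge (dd^c\phi+\theta)^{j-1}\wedge\theta^{n-j} \leq 0. $$
Hence the sequence of integrals is nonincreasing in $j$, every term is bounded above by $\int_X\phi\,\theta^n$, and averaging yields the claim. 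The inequality extends to $\phi \in \PSH(\theta)$ via the bounded $\theta$-psh approximants $\phi_m := \max(\phi,-m) \searrow \phi$, using Bedford--Taylor continuity of Monge--Amp\`ere masses along decreasing sequences. Combined with $\sup\phi \leq 0$ (which forces $\phi \leq 0$ pointwise as a usc function) and $E_\theta(\phi) \geq -C$, this gives $\|\phi\|_{L^1(\theta^n)} = -\int_X \phi\,\theta^n \leq C$ uniformly on $K$.

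For closedness, let $\phi_j \in K$ with $\phi_j \to \phi$ in $L^1$. After replacing $\phi$ by its upper semicontinuous regularization we have $\phi \in \PSH(\theta)$; since $\phi_j \leq 0$ almost everywhere, the same holds for $\phi$, whence $\sup\phi \leq 0$. The nontrivial step is $E_\theta(\phi) \geq -C$, which reduces to the upper semicontinuity of $E_\theta$ with respect to $L^1$ convergence. I would obtain this by combining Demailly regularization (producing a decreasing sequence of smooth $\theta$-psh majorants $\tilde\phi_m \searrow \phi$ with $E_\theta(\tilde\phi_m) \to E_\theta(\phi)$ by Bedford--Taylor continuity) with a Hartogs-type argument showing that, modulo $L^1$-small errors, $\phi_j \leq \tilde\phi_m$ for $j$ large. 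Monotonicity of $E_\theta$ then yields $\limsup_j E_\theta(\phi_j) \leq E_\theta(\tilde\phi_m)$; letting $m\to \infty$ gives $\limsup_j E_\theta(\phi_j) \leq E_\theta(\phi)$, and the required bound follows.

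The principal obstacle is precisely this upper semicontinuity of $E_\theta$: the functional is manifestly \emph{not} continuous in $L^1$, so the one-sided bound has to be extracted by carefully interpolating between $L^1$-convergence and decreasing convergence of $\theta$-psh functions via Hartogs' lemma, and by exploiting the strong continuity of Monge--Amp\`ere operators that is available only along monotone sequences. Everything else is standard.
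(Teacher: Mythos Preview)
The paper does not supply its own proof of this lemma: it is quoted verbatim as Lemma~2.6 of \cite{BBGZ} and used as a black box, so there is nothing in the present paper to compare your argument against. Your outline is correct and is essentially the standard argument one finds in \cite{BBGZ}: the inequality $E_\theta(\phi)\le\int_X\phi\,\theta^n$ for $\phi\le 0$ gives the uniform $L^1$-bound, classical compactness of sup-normalized $\theta$-psh families gives relative compactness, and upper semicontinuity of $E_\theta$ under $L^1$-convergence (proved exactly via the Hartogs/monotonicity interpolation you describe) gives closedness.

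One small point worth tightening: in your Hartogs step you write ``$\phi_j\le\tilde\phi_m$ for $j$ large modulo $L^1$-small errors,'' but what you actually need and use is the uniform pointwise bound $\phi_j\le\tilde\phi_m+\varepsilon$ on all of $X$ for $j\gg 1$, which is precisely the content of Hartogs' lemma applied against the \emph{continuous} majorant $\tilde\phi_m+\varepsilon$. With that phrasing the monotonicity of $E_\theta$ applies cleanly. Also, your extension of $E_\theta(\phi)\le\int_X\phi\,\theta^n$ from smooth to general $\phi\in\PSH(\theta)$ via $\phi_m=\max(\phi,-m)$ tacitly uses that the integration-by-parts identity already holds for \emph{bounded} $\theta$-psh functions (Bedford--Taylor), not just smooth ones; this is standard but worth saying explicitly.
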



\subsection{Ding Functional}
\label{sec:ding}
\begin{definition}
Let $\theta_i$ and $\omega_0$ be as above. We define the associated Ding functional on $\prod_i \cE(\theta_i)$ by
\begin{equation}\label{eq:Ding}
D(\phi_1,\ldots,\phi_k) = -\sum_i \frac{1}{|\alpha_i|} E_{\theta_i}(\phi_i) -\lambda\log\int e^{-\lambda\sum_i \phi_i}\omega_0^n
\end{equation}
\end{definition}

We will prove the following statement.

\begin{thm}
\label{thm:solutionminimizer}
Assume $(\phi_i)\in \prod_i\cE^1(\theta_i)$ is a minimizer of the associated Ding functional. Then each $dd^c\phi_i+\theta_i$ is smooth and K\"ahler and the $k$-tuple $(dd^c\phi_i + \theta_i)$ is cKE.   
\end{thm}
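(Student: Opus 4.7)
The plan is to follow the variational approach of Berman-Boucksom-Guedj-Zeriahi \cite{BBGZ}. The proof has two stages: first, showing that any finite-energy minimizer satisfies the coupled Monge-Amp\`ere system \eqref{eq:MA}; second, upgrading any such finite-energy solution to a smooth one. By Lemma~\ref{lemma:MAeq}, the two stages together give the conclusion.

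For the first stage, fix an index $i$ and a smooth function $v$ on $X$. Since $\phi_i + tv$ need not be $\theta_i$-psh, I would replace it by its upper envelope $P_{\theta_i}(\phi_i + tv)$ inside $\PSH(\theta_i)$, as in BBGZ. The two key facts I would invoke are that $P_{\theta_i}(\phi_i+tv)\leq \phi_i+tv$ with equality $\MA_{\theta_i}(\phi_i)$-almost everywhere, and that $t\mapsto E_{\theta_i}(P_{\theta_i}(\phi_i+tv))$ is differentiable at $t=0$ with derivative $\int v\,\MA_{\theta_i}(\phi_i)$. Since $(\phi_i)$ minimizes $D$ and we may test against both $v$ and $-v$, the total derivative must vanish:
$$0 = -\frac{1}{|\alpha_i|}\int v\,\MA_{\theta_i}(\phi_i) + \int v\,\mu,\qquad \mu := \frac{e^{-\lambda\sum_j \phi_j}\,\omega_0^n}{\int_X e^{-\lambda\sum_j \phi_j}\,\omega_0^n},$$
where I have used $\lambda^2 = 1$ in differentiating the log-integral term of $D$. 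As $v$ is arbitrary, $|\alpha_i|^{-1}\MA_{\theta_i}(\phi_i) = \mu$ for each $i$, which is precisely \eqref{eq:MA}.

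For the second stage, I need sufficient integrability of $\mu$. When $\lambda = -1$ the density $e^{\sum_j \phi_j}$ is bounded above by $e^{k\max_j\sup\phi_j}$, so $\mu$ has $L^\infty$ density. When $\lambda = +1$ I would appeal to a Skoda/Tian-type exponential integrability estimate, applied inductively to the $\phi_j\in\cE^1(\theta_j)$ together with H\"older's inequality, to conclude that $\mu$ has $L^p$ density for some $p>1$. Kołodziej's uniform estimate then forces $\phi_i\in L^\infty$, and the right-hand side of each equation in \eqref{eq:MA} becomes a bounded positive measure. At that point the Caffarelli-Evans-Krylov theory for complex Monge-Amp\`ere equations (as in Yau's proof of the Calabi conjecture), followed by a standard Schauder bootstrap, yields $\phi_i\in C^\infty$. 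Positivity of $\MA_{\theta_i}(\phi_i)$ on the resulting smooth potentials then forces each $dd^c\phi_i+\theta_i$ to be a K\"ahler metric, as required.

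The most delicate step I anticipate is the integrability of $\mu$ in the Fano case $\lambda = +1$: the exponential integrability of $-\sum_j \phi_j$ does not follow immediately from finite energy of the individual $\phi_j$, since it is precisely the sum that sits in the exponent. A careful bootstrap---improving integrability of one factor at a time via the MA equation it satisfies---or a coupled Skoda-type estimate, is needed to feed Kołodziej's theorem. Once the $L^\infty$ bounds are in place, the remaining regularity argument is routine.
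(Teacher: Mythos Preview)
Your two-stage plan is correct and matches the paper's argument: the paper likewise first shows minimizers are weak solutions via the envelope trick (Proposition~\ref{prop:weaksolutionminimizer}, using Theorem~\ref{thm:energydifferentiablity} and Lemma~\ref{lemma:volumeterm}) and then upgrades regularity (Theorem~\ref{thm:regularity}). The one place where the paper is sharper is precisely the step you flag as delicate. Rather than a coupled bootstrap, the paper observes that each $\phi_j\in\cE^1(\theta_j)$ has full Monge-Amp\`ere mass (Proposition~\ref{prop:fullmass}) and hence zero Lelong numbers (Theorem~\ref{thm:lelongnumbers}, from \cite{BBEGZ}); by Skoda this gives $e^{-\phi_j}\in L^p$ for every $p<\infty$ and every $j$ individually, so H\"older immediately yields $e^{-\sum_j\phi_j}\in L^p$ with no iteration needed. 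For the remaining regularity the paper uses the continuity result of \cite{EGZ} and then the Laplacian estimate of \cite{BBEGZ} (Theorem~\ref{thm:laplaceestimate}), which is tailored to densities of the form $e^{\psi^+-\psi^-}$ with $\psi^\pm$ quasi-psh; this sidesteps the issue that after Ko\l odziej the right-hand side is only $L^\infty$, which is not by itself enough to run Yau's $C^2$ estimate directly.
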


Let $\phi\in \PSH(\theta_i)$ and $u$ be an upper semicontinuous function. The  $\theta_i$-plurisubharmonic envelope of $\phi+u$ is the function
$$ P(\phi+u)(x) = \sup \{\psi(x): \psi\in \PSH(\theta_i), \psi \leq \phi+u \}. $$
It is clear that $P(\phi+u)\leq \phi+u$ and $P(\phi+u)=\phi+u$ if $\phi+u\in \PSH(\theta_i)$. Moreover, $P(\phi+u)\in \cE^1(\theta_i)$ if $\phi\in \cE^1(\theta_i)$ (e.g. see \cite[Section 4.1]{BBGZ}). 

By the pioneering work of Bedford-Taylor \cite{BT} (in the local setting) and then Guedj-Zeriahi and others \cite{GZ,BEGZ} (for compact manifolds) it is possible to extend the definition of $\MA_{\theta_i}(\phi)$ to $\PSH(\theta_i)$. This gives a concept of weak solutions to~\eqref{eq:MA}. Moreover, one of the key point in the variational approach to Monge-Amp\`ere equations is given by the following characterization of this weak Monge-Amp\`ere operator.
\begin{thm}[\cite{BB},\cite{BBGZ}]
\label{thm:energydifferentiablity}
Let $\phi\in \cE^1(\theta_i)$ and $v$ be a continuous function on $X$. Then $E_{\theta_i}\circ P(\phi+tv)$ is differentiable at $t=0$ and
$$ \left. \frac{d}{dt}\right|_{t=0} E_{\theta_i}\circ P(\phi+tv) = \int_X v \MA(\phi). $$
\end{thm}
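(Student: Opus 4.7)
The plan is to exploit concavity of $f(t):=E_{\theta_i}\bigl(P(\phi+tv)\bigr)$ and then pin down both one-sided derivatives $f'_\pm(0)$, each equal to $\int_X v\,\MA_{\theta_i}(\phi)$. Concavity of $f$ is a two-step observation: first, the envelope $P$ is concave in $t$ since for $s\in[0,1]$ the function $(1-s)P(\phi+t_0v)+sP(\phi+t_1v)$ is $\theta_i$-plurisubharmonic (its $dd^c$ is bounded below by $-(1-s)\theta_i-s\theta_i=-\theta_i$) and dominated by $\phi+((1-s)t_0+st_1)v$, hence by $P\bigl(\phi+((1-s)t_0+st_1)v\bigr)$; second, $E_{\theta_i}$ is concave and monotone on $\cE^1(\theta_i)$, as recalled in Section~\ref{sect:energy}. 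Composing yields concavity of $f$ on $\R$, so $f'_\pm(0)$ exist and $f$ is continuous at $0$.

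For the upper bound on $f'_+(0)$ I invoke the first-order inequality
$$ E_{\theta_i}(\psi)-E_{\theta_i}(\phi) \leq \int_X (\psi-\phi)\,\MA_{\theta_i}(\phi), \qquad \phi,\psi\in\cE^1(\theta_i), $$
a consequence of \eqref{eq:Ederivative} and concavity of $s\mapsto E_{\theta_i}\bigl(\phi+s(\psi-\phi)\bigr)$ in the smooth case, extended to $\cE^1$ by approximation. Applied with $\psi=\phi_t:=P(\phi+tv)$, using $\phi_t\leq\phi+tv$ and positivity of $\MA_{\theta_i}(\phi)$, this gives $f(t)-f(0)\leq t\int_X v\,\MA_{\theta_i}(\phi)$ for $t>0$, hence $f'_+(0)\leq \int_X v\,\MA_{\theta_i}(\phi)$. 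Dividing by $t<0$ flips the inequality and yields $f'_-(0)\geq \int_X v\,\MA_{\theta_i}(\phi)$.

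The matching bounds rest on the orthogonality property, which is the crucial input: $\MA_{\theta_i}(\phi_t)$ is concentrated on the contact set $\{\phi_t=\phi+tv\}$. This is the global counterpart of the Bedford--Taylor obstacle-problem identity, proved in \cite{BB} via balayage. Combined with the reverse first-order inequality $E_{\theta_i}(\psi)-E_{\theta_i}(\phi)\geq \int_X(\psi-\phi)\,\MA_{\theta_i}(\psi)$, it gives
$$ f(t)-f(0) \geq \int_X (\phi_t-\phi)\,\MA_{\theta_i}(\phi_t) = t\int_X v\,\MA_{\theta_i}(\phi_t), $$
since $\phi_t-\phi=tv$ on the support of $\MA_{\theta_i}(\phi_t)$. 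Dividing by $t$ with the appropriate orientation gives $f'_+(0)\geq \limsup_{t\to 0^+}\int_X v\,\MA_{\theta_i}(\phi_t)$ and $f'_-(0)\leq \liminf_{t\to 0^-}\int_X v\,\MA_{\theta_i}(\phi_t)$.

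To close the two squeezes, note that $|\phi_t-\phi|\leq |t|\|v\|_\infty$ forces $\phi_t\to\phi$ uniformly, and continuity of $f$ at the interior point $0$ gives $E_{\theta_i}(\phi_t)\to E_{\theta_i}(\phi)$; together this is $\cE^1$-convergence, and weak continuity of the non-pluripolar Monge--Amp\`ere operator along such sequences (\cite{BT,GZ,BEGZ}) combined with continuity of $v$ yields $\int_X v\,\MA_{\theta_i}(\phi_t)\to \int_X v\,\MA_{\theta_i}(\phi)$. Both one-sided derivatives collapse to $\int_X v\,\MA_{\theta_i}(\phi)$, proving differentiability at $t=0$ with the stated value. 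The main obstacle is the orthogonality property together with the weak $\cE^1$-continuity of $\MA$: both are genuine pluripotential-theoretic inputs from \cite{BT,GZ,BEGZ,BB}, whereas the remainder is a clean concavity-and-approximation bookkeeping argument.
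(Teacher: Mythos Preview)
The paper does not give its own proof of this theorem: immediately after the statement it simply records that the bounded case is due to Berman--Boucksom \cite{BB} and the extension to $\cE^1$ to \cite{BBGZ}. Your proposal is therefore not competing with a proof in the paper but rather supplying one, and the outline you give is precisely the argument of those references: concavity of $t\mapsto E_{\theta_i}(P(\phi+tv))$ from monotonicity and concavity of $E_{\theta_i}$, the easy one-sided bound from the tangent inequality $E(\psi)-E(\phi)\le\int(\psi-\phi)\,\MA(\phi)$, and the hard matching bound from the orthogonality relation $\MA_{\theta_i}(P(\phi+tv))$ being supported on the contact set, together with continuity of $\MA$ along the (uniformly convergent, energy-convergent) family $\phi_t$.

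One small caution in your write-up: the orthogonality property is established in \cite{BB} for \emph{bounded} obstacles, and extending it to obstacles of the form $\phi+tv$ with $\phi\in\cE^1(\theta_i)$ is exactly the additional work carried out in \cite{BBGZ}; you allude to this but should not treat it as automatic. Otherwise your sketch is correct and faithfully reproduces the cited argument.
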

For $\phi$ bounded this was proved by Berman and Boucksom in \cite{BB}. For $\phi\in \cE^1(\theta_i)$ it was established in in \cite{BBGZ}.

Moreover, we have
\begin{lem}
\label{lemma:volumeterm}
Let $(\phi_i)\in \prod_i\cE^1(\theta_i)$ and let $(v_i)$ be a $k$-tuple of continuous functions on $X$. Then 
$$ -\lambda\log\int_X e^{-\lambda \sum \phi_i+tv_i}\omega_0^n$$ 
is differentiable at $t=0$ and
$$ \left. \frac{d}{dt}\right|_{t=0} -\lambda\log\int e^{-\lambda \sum \phi_i+tv_i}\omega_0^n = \frac{\int_X \left(\sum v_i\right) e^{-\lambda\sum \phi_i}\omega_0^n}{\int_X e^{-\lambda\sum \phi_i}\omega_0^n}. $$
\end{lem}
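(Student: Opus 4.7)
The plan is to differentiate under the integral sign and then apply the chain rule to the outer logarithm. Setting $\Phi := \sum_i \phi_i$, $V := \sum_i v_i$, and $M := \sum_i \|v_i\|_{L^\infty(X)}$ (finite by continuity of the $v_i$ on the compact manifold $X$), I would define
$$h(t) := \int_X e^{-\lambda(\Phi + tV)}\,\omega_0^n$$
and reduce to showing that $h$ takes finite, strictly positive values in a neighborhood of $t=0$ and is differentiable there with $h'(0) = -\lambda \int_X V\,e^{-\lambda \Phi}\omega_0^n$. The chain rule applied to $-\lambda \log h$ then produces the claimed derivative (using $\lambda^2 = 1$).

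The uniform pointwise bound $e^{-\lambda(\Phi + tV)} \le e^{|t|M}\,e^{-\lambda \Phi}$ reduces the local finiteness of $h$ to the integrability of $e^{-\lambda\Phi}$ against $\omega_0^n$. When $\lambda = -1$, each $\phi_i$ is $\theta_i$-plurisubharmonic on the compact $X$, hence bounded above, so $e^{-\lambda\Phi} = e^{\Phi}$ is bounded; strict positivity of $h(0)$ then follows from $\phi_i \in \cE^1(\theta_i)$, which implies $\phi_i > -\infty$ on a set of full Lebesgue measure. When $\lambda = 1$, the finiteness of $h(0) = \int_X e^{-\sum\phi_i}\omega_0^n$ is precisely the condition implicit in $(\phi_i)$ lying in the domain where the Ding functional \eqref{eq:Ding} is real-valued, which is the only regime where the lemma is of interest; in that regime the displayed bound promotes $h(0) < \infty$ to $h(t) < \infty$ uniformly for $t$ in any bounded interval.

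For the differentiation itself, I would apply the dominated convergence theorem using the mean value theorem bound
$$\left|\frac{e^{-\lambda(\Phi + tV)} - e^{-\lambda\Phi}}{t}\right| \le M\,e^{\delta M}\,e^{-\lambda\Phi},$$
valid for $|t| \le \delta$, whose right-hand side is integrable by the previous paragraph. Passing $t \to 0$ inside the integral yields $h'(0) = -\lambda \int_X V\,e^{-\lambda\Phi}\omega_0^n$, and the chain rule for $-\lambda\log h$ finishes the argument. The only non-routine step is securing the integrability of $e^{-\sum\phi_i}$ in the Fano case, which is however built into the finite-value hypothesis on the Ding functional; apart from this, no regularity of the $\phi_i$ beyond membership in $\cE^1(\theta_i)$ is used, and the possible pluripolar locus where $\phi_i = -\infty$ is absorbed harmlessly into that integrability hypothesis.
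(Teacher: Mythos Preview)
Your argument is correct and is the standard dominated-convergence proof; the paper itself does not give an independent argument but simply refers to \cite[Lemma~6.1]{BBGZ}, whose proof proceeds along the same lines. One small refinement: in the Fano case you can justify $\int_X e^{-\sum\phi_i}\omega_0^n<\infty$ directly from the hypothesis $\phi_i\in\cE^1(\theta_i)$ (full mass $\Rightarrow$ zero Lelong numbers by Theorem~\ref{thm:lelongnumbers}, then Skoda gives $e^{-\phi_i}\in L^p$ for all $p$), rather than treating it as an implicit side condition; this makes the lemma self-contained under its stated assumptions.
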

\begin{proof}
See e.g. \cite[Lemma 6.1]{BBGZ}. 
\end{proof}
Together with the previous lemma this gives
\begin{prop}
\label{prop:weaksolutionminimizer}
If $(\phi_i)\in \prod_i\cE^1(\theta_i)$ is a minimizer of the associated Ding functional then $(\phi_i)$ is a weak solution of \eqref{eq:MA}.
\end{prop}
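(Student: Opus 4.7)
The plan is to derive the Euler--Lagrange equations for $D$ by a first-variation argument, perturbing one coordinate at a time. Fix an index $i$ and a continuous test function $v$ on $X$. The naive linear perturbation $\phi_i\mapsto \phi_i+tv$ is not admissible because $\phi_i+tv$ need not lie in $\PSH(\theta_i)$, so following the philosophy of \cite{BBGZ} I would perturb instead through the envelope
$$\phi_i^t := P(\phi_i+tv)\in \cE^1(\theta_i),$$
which agrees with $\phi_i$ at $t=0$ and satisfies $\phi_i^t\leq \phi_i+tv$ pointwise.

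Set $f(t):=D(\phi_1,\ldots,\phi_i^t,\ldots,\phi_k)$ and introduce the comparison
$$\tilde f(t) := -\frac{1}{|\alpha_i|}E_{\theta_i}(\phi_i^t) - \sum_{j\neq i}\frac{1}{|\alpha_j|}E_{\theta_j}(\phi_j) - \lambda\log\int_X e^{-\lambda(\phi_i+tv)-\lambda\sum_{j\neq i}\phi_j}\,\omega_0^n,$$
in which the log term uses the linear perturbation $\phi_i+tv$ rather than $\phi_i^t$. A short case analysis on $\lambda\in\{\pm 1\}$, applied to the pointwise inequality $\phi_i^t\leq \phi_i+tv$, shows that in both signs the monotonicity of $\psi\mapsto -\lambda\log\int e^{-\lambda\psi-\lambda\sum_{j\neq i}\phi_j}\omega_0^n$ works in the favourable direction, so that $f(t)\leq \tilde f(t)$ for all admissible $t$, with equality at $t=0$. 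Combined with the minimizing hypothesis $f(0)\leq f(t)$, this gives
$$\tilde f(0) = f(0) \leq f(t) \leq \tilde f(t),$$
so $\tilde f$ attains a minimum at $t=0$ as well.

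Now $\tilde f$ is differentiable at $t=0$: Theorem~\ref{thm:energydifferentiablity} yields $\frac{d}{dt}\big|_{t=0}E_{\theta_i}(\phi_i^t)=\int_X v\,\MA_{\theta_i}(\phi_i)$, and Lemma~\ref{lemma:volumeterm} computes the derivative of the log term since the perturbation inside it is literally linear in $t$. Setting $\tilde f'(0)=0$ yields
$$\int_X v\left(\frac{1}{|\alpha_i|}\MA_{\theta_i}(\phi_i) - \frac{e^{-\lambda\sum_j\phi_j}\,\omega_0^n}{\int_X e^{-\lambda\sum_j\phi_j}\,\omega_0^n}\right) = 0$$
for every continuous $v$ on $X$. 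Since a finite signed Borel measure on $X$ is determined by its action on continuous functions, the parenthesis vanishes as a measure. Running the argument for each $i\in\{1,\ldots,k\}$ recovers the whole chain of equalities in \eqref{eq:MA}.

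The one nontrivial point is the envelope comparison itself: checking that $\phi_i^t\in\cE^1(\theta_i)$ so that $f$ is well defined (which the excerpt cites from \cite{BBGZ}) and checking that the sign of $\lambda$ makes the inequality $f(t)\leq\tilde f(t)$ go the correct way. Once these are in place the argument is a purely formal first variation, reducing the proposition to Theorem~\ref{thm:energydifferentiablity} and Lemma~\ref{lemma:volumeterm}.
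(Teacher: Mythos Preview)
Your argument is correct and is essentially the same as the paper's: both replace the non-admissible linear perturbation by the envelope $P(\phi_i+tv)$, use $P(\phi_i+tv)\le \phi_i+tv$ together with the monotonicity of $\psi\mapsto -\lambda\log\int e^{-\lambda\psi}\omega_0^n$ to see that the auxiliary function with the \emph{linear} log term is minimized at $t=0$, and then differentiate via Theorem~\ref{thm:energydifferentiablity} and Lemma~\ref{lemma:volumeterm}. The only cosmetic difference is that the paper perturbs all coordinates at once and then specializes by setting $v_j=0$ for $j\ne i$, whereas you fix one coordinate from the outset; your chain $\tilde f(0)=f(0)\le f(t)\le \tilde f(t)$ is exactly the paper's inequality $G(0)=D(\phi_i)\le D(P(\phi_i+tv_i))\le G(t)$ read backwards.
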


\begin{proof}
Let $v_1,\ldots,v_k$ be continuous functions on $X$ and consider the function on $\R$ defined by
$$ G(t) = -\sum_i \frac{1}{|\alpha_i|}E_{\theta_i}\circ P(\phi_i+tv_i) - \lambda\log \int e^{-\lambda\sum \phi_i + tv_i}\omega_0^n. $$
Since $P(\phi_i + tv_i) \leq \phi_i + tv_i$ for all $i$ we get 
\begin{eqnarray}
G(t) & \geq & -\sum_i \frac{1}{|\alpha_i|} E_{\theta_i}\circ P(\phi_i+tv_i) - \lambda\log \int e^{-\lambda\sum P(\phi_i+tv_i)}\omega_0^n \nonumber \\
& = & D(P(\phi_i+tv_i)) \geq D(\phi_i) = G(0). \nonumber 
\end{eqnarray}
Thus $t=0$ is a minimizer for $G$. 
By Theorem~\ref{thm:energydifferentiablity} and Lemma~\ref{lemma:volumeterm} $G$ is differentiable at $0$ and 
\begin{equation}
\label{eq:dingderivative}
\left. \frac{d}{dt}\right|_{t=0} G(t) = -\sum_i \frac{1}{|\alpha_i|}\int_X v_i \MA(\phi_i) + \int_X \left(\sum v_i\right) e^{-\lambda\sum \phi_i}\omega_0^n / \int_X e^{-\lambda\sum \phi_i}\omega_0^n. 
\end{equation}
Since $0$ is a minimizer for $G$ this has to vanish. Fixing $1\leq j\leq k$ and putting $v_i=0$ for all $i\not=j$ gives
\begin{eqnarray}
0 & = & -\frac{1}{|\alpha_j|}\int_X v_j \MA(\phi_j) + \int_X v_j e^{-\lambda\sum \phi_i} \omega_0^n / \int_X e^{-\lambda\sum \phi_i}\omega_0^n \nonumber \\
& = & \int_X v_j\left(-\frac{1}{|\alpha_j|}\MA(\phi_j)+e^{-\lambda\sum \phi_i} \omega_0^n / \int_X e^{-\lambda\sum \phi_i}\omega_0^n\right) \nonumber
\end{eqnarray}
and since this holds for all continuous functions $v_j$ we get 
\begin{equation} 
\label{eq:MAj}
\frac{1}{|\alpha_j|}\MA(\phi_j) = e^{-\lambda\sum \phi_i} \omega_0^n / \int_X e^{-\lambda\sum \phi_i}\omega_0^n.
\end{equation}
By repeating the argument we see that \eqref{eq:MAj} holds for all $j$.
\end{proof}

\subsection{Regularity of Weak Solutions}
Here we will prove the following
\begin{thm}
\label{thm:regularity}
Assume $(\phi_i)\in \prod_i \cE(\theta_i)$ is a weak solution of \eqref{eq:MA}. Then $\phi_i$ is smooth for each $i$ and $(\phi_i)$ satisfies \eqref{eq:MA} in the classical sense, hence by Lemma \ref{lemma:MAeq} $(dd^c\phi_i+\theta_i)$ is cKE. 
\end{thm}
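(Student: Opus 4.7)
The plan is to adapt the standard regularity theory for the scalar complex Monge-Amp\`ere equation to our coupled system. I would proceed in two main stages: first establish $L^\infty$-bounds (and continuity) for each $\phi_i$, then bootstrap to $C^\infty$-smoothness.

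For the first stage I would normalize $\sup_X \phi_i = 0$ for every $i$; this is permissible because \eqref{eq:MA} is invariant under adding a constant to any single $\phi_i$, since such a shift merely rescales the normalizing integral $Z := \int_X e^{-\lambda \sum_j \phi_j}\omega_0^n$ by the same factor as the numerator. The task is then to show that the common right-hand-side density $f := e^{-\lambda \sum_j \phi_j}/Z$ belongs to $L^p(\omega_0^n)$ for some $p>1$, so that Kolodziej's $L^\infty$-estimate can be invoked on each individual equation $\MA_{\theta_i}(\phi_i) = |\alpha_i| f\,\omega_0^n$. When $\lambda = -1$ this is trivial, since $\sum_j \phi_j \leq 0$ makes $f$ bounded. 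When $\lambda = 1$, I would invoke a uniform H\"ormander/Skoda-type integrability estimate (in the spirit of Tian's $\alpha$-invariant) valid for $\theta_j$-plurisubharmonic functions normalized by $\sup = 0$ to produce $\int_X e^{-(1+\epsilon)\sum_j \phi_j}\omega_0^n < C$ for some $\epsilon>0$ depending only on the fixed data $(X,\theta_1,\ldots,\theta_k)$. Kolodziej's theorem then yields uniform boundedness and continuity of every $\phi_i$.

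Once all $\phi_i$ are bounded and continuous, the density $f$ is continuous and pinched between two positive constants. Each equation $\MA_{\theta_i}(\phi_i) = |\alpha_i| f\,\omega_0^n$ is then a scalar complex Monge-Amp\`ere equation with smooth reference $\theta_i$ and continuous, strictly positive right-hand side; the classical a priori $C^2$-estimate of Yau together with Evans-Krylov and Schauder bootstrap upgrade each $\phi_i$ to $C^\infty$. Consequently \eqref{eq:MA} holds classically, and by Lemma~\ref{lemma:MAeq} the tuple $(dd^c\phi_i + \theta_i)$ is coupled K\"ahler-Einstein.

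I expect the principal difficulty to lie in the Fano case of the first stage: one must secure uniform exponential integrability for the \emph{sum} $\sum_j \phi_j$, whose summands live in distinct cohomology classes $\alpha_j$, and the ``normalization by $\sup$'' must be exploited simultaneously in each class. Once this uniform integrability is in hand, the remainder of the argument decouples completely, each equation being handled by well-established scalar regularity theory with the coupling entering only through the common continuous density $f$.
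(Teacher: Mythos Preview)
Your overall architecture is right and matches the paper's: obtain $L^p$-integrability of the common density, deduce boundedness/continuity of each $\phi_i$, then upgrade via Laplacian and Evans--Krylov type estimates. The gap is in how you obtain the $L^p$-integrability in the Fano case.

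You propose to use a uniform Skoda/Tian $\alpha$-invariant bound ``valid for $\theta_j$-plurisubharmonic functions normalized by $\sup=0$'' to produce $\int_X e^{-(1+\epsilon)\sum_j\phi_j}\omega_0^n<C$. But $\sum_j\phi_j$ is $(\sum_j\theta_j)$-psh with $\sum_j\theta_j\in c_1(X)$, so what you are asking for is precisely that the $\alpha$-invariant of $-K_X$ exceed $1$. This fails on most Fano manifolds (indeed $\alpha(-K_X)>1$ is a very restrictive condition, much stronger than the Tian criterion $\alpha>\tfrac{n}{n+1}$). Trying to go through H\"older and the individual classes makes it worse: you would need $\alpha(L_j)>k(1+\epsilon)$ for each $j$. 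So the uniform-integrability route, taken over \emph{all} quasi-psh functions, cannot close.

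The fix---and this is exactly what the paper does---is to use the hypothesis $\phi_i\in\cE^1(\theta_i)$, which you have not yet exploited. Finite energy implies full Monge--Amp\`ere mass, and by a theorem of Berman--Boucksom--Eyssidieux--Guedj--Zeriahi full mass forces all Lelong numbers of $\phi_i$ to vanish. Skoda's theorem then gives $e^{-\phi_i}\in L^p$ for \emph{every} $p<\infty$ (for these particular $\phi_i$, not uniformly over the class), and hence $e^{-\sum_j\phi_j}\in L^p$ for all $p$ by H\"older. From there Kolodziej (or the continuity theorem of Eyssidieux--Guedj--Zeriahi) gives continuity of each $\phi_i$, and the rest of your bootstrap goes through.

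One minor caveat on your second stage: ``Yau's classical $C^2$-estimate'' in its original form needs a $C^2$ right-hand side, which you do not yet have. The paper instead invokes the Laplacian estimate of \cite{BBEGZ} (Theorem~10.1 there), which applies to right-hand sides of the form $e^{\phi^+-\phi^-}dV$ with $\phi^{\pm}$ quasi-psh and $e^{-\phi^-}\in L^p$---precisely our situation---together with the $C^{2,\alpha}$ estimate of Wang \cite{WangY}. After that, Schauder bootstrap works as you say, since improving the regularity of any $\phi_j$ improves the regularity of the common density $f$, which in turn feeds back into every equation.
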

The proof will follow an argument in \cite{BBEGZ}. The crucial point is to obtain a priori estimates on $dd^c\phi_i+\theta_i$. Then the theorem will follow from theory for elliptic partial differential equations. Most of the literature we will quote below treat more general cases than what is needed here.

First of all there is an alternative characterization of finite energy metrics. Let $\omega$ be a K\"ahler metric. We say that $\phi\in \PSH(\omega)$ has full Monge-Amp\`ere mass if 
$$ \int_X \MA_{\omega}(\phi)=\int_X \omega^n.$$
\begin{prop}[Proposition 2.8 in \cite{BBGZ}]
\label{prop:fullmass}
Let $\phi\in \PSH(\omega)$. 
Then $\phi\in \cE^1(\omega)$ if and only if $\phi$ has full Monge-Amp\`ere mass and 
$$ \int |\phi| \MA(\phi)<\infty. $$
\end{prop}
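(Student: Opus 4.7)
Plan: I will prove the equivalence by comparing $\phi$ with its canonical bounded cut-offs $\phi_j := \max(\phi,-j)\in \PSH(\omega)\cap L^\infty$, which decrease pointwise to $\phi$. By the definition of $E_\omega$ on $\PSH(\omega)$ via infima of smooth approximants together with Proposition~2.1 of \cite{BBGZ} (monotonicity of $E$ along decreasing sequences), one has
\[
E_\omega(\phi)\;=\;\lim_{j\to\infty} E_\omega(\phi_j),
\]
the limit being finite exactly when $\phi\in\cE^1(\omega)$ and $-\infty$ otherwise. Since each $\phi_j$ is bounded, Bedford--Taylor integration by parts justifies the classical formula
\[
E_\omega(\phi_j)=\frac{1}{n+1}\sum_{k=0}^n\int \phi_j\,(dd^c\phi_j+\omega)^k\wedge\omega^{n-k},
\]
and the entire argument revolves around controlling this quantity as $j\to\infty$.

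The central analytic input is plurifine locality of the Bedford--Taylor operator: on the open set $\{\phi>-j\}$ one has $\phi_j=\phi$, hence $\MA_\omega(\phi_j)\,\mathbf{1}_{\{\phi>-j\}}=\MA_\omega(\phi)\,\mathbf{1}_{\{\phi>-j\}}$, while on $\{\phi\leq -j\}$ the function $\phi_j$ is constant equal to $-j$. Setting $\mu_j:=\MA_\omega(\phi_j)(\{\phi\leq -j\})$, this decomposition gives
\[
\int \phi_j\,\MA_\omega(\phi_j)=\int_{\{\phi>-j\}}\phi\,\MA_\omega(\phi)\;-\;j\,\mu_j,
\]
and a further application of integration by parts reduces each mixed term $\int\phi_j(dd^c\phi_j+\omega)^k\wedge\omega^{n-k}$ to a combination of $\int\phi_j\,\MA_\omega(\phi_j)$ and terms of the form $\int (dd^c\phi_j+\omega)^{\ell}\wedge\omega^{n-\ell}=\int\omega^n$ (Stokes). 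Thus the finiteness of $E_\omega(\phi_j)$ is controlled, up to a bounded error, by the finiteness of $\int\phi_j\,\MA_\omega(\phi_j)$.

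For the ``only if'' direction I would argue as follows. After normalizing $\phi\leq 0$ and hence $\phi_j\leq 0$, the quantity $-j\mu_j$ is nonpositive while $\int_{\{\phi>-j\}}\phi\,\MA_\omega(\phi)$ is a decreasing sequence of nonpositive numbers converging to $\int \phi\,\MA_\omega(\phi)\in [-\infty,0]$ by monotone convergence. The assumption $E_\omega(\phi)>-\infty$ forces the sum (and therefore each summand, given their signs) to remain bounded: in particular $\int|\phi|\,\MA_\omega(\phi)<\infty$ and $j\mu_j$ stays bounded. The final ingredient is to show $\mu_j\to 0$, which together with $\int\MA_\omega(\phi_j)=\int\omega^n$ and plurifine locality yields full Monge--Amp\`ere mass; this follows because $j\mu_j$ bounded forces $\mu_j=O(1/j)\to 0$. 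For the ``if'' direction, the same decomposition turns $\int\MA_\omega(\phi)=\int\omega^n$ and $\int|\phi|\,\MA_\omega(\phi)<\infty$ into a uniform lower bound for $\int\phi_j\,\MA_\omega(\phi_j)$ (the loss term $j\mu_j$ vanishes in the limit by full mass), and hence into a uniform lower bound on $E_\omega(\phi_j)$, so $E_\omega(\phi)=\lim E_\omega(\phi_j)>-\infty$.

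The hard part will be the joint control of the two competing terms $\int_{\{\phi>-j\}}\phi\,\MA_\omega(\phi)$ and $-j\mu_j$, or equivalently justifying that under either set of hypotheses no Monge--Amp\`ere mass is lost at the polar set $\{\phi=-\infty\}$. This is precisely the content of the full-mass condition, and is the nontrivial pluripotential-theoretic heart of the argument; the remaining manipulations are routine integration by parts combined with Bedford--Taylor plurifine locality and monotone convergence.
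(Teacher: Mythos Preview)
The paper does not supply a proof of this proposition; it is quoted directly from \cite{BBGZ} (Proposition~2.8 there) and used as a black box. Your sketch follows the standard argument from \cite{GZ,BBGZ}: canonical cut-offs $\phi_j=\max(\phi,-j)$, plurifine locality, and comparison of $E_\omega(\phi_j)$ with $\int\phi_j\,\MA_\omega(\phi_j)$. The overall strategy is correct, so there is nothing to compare against in the present paper.

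Two imprecisions are worth correcting. First, integration by parts does \emph{not} express the mixed terms $I_k:=\int\phi_j(dd^c\phi_j+\omega)^k\wedge\omega^{n-k}$ as a combination of $I_n=\int\phi_j\,\MA_\omega(\phi_j)$ and total volumes. What it gives (for $\phi_j\le 0$) is
\[
I_k-I_{k+1}=\int d\phi_j\wedge d^c\phi_j\wedge(dd^c\phi_j+\omega)^k\wedge\omega^{n-k-1}\ \ge\ 0,
\]
hence the monotonicity $0\ge I_0\ge\cdots\ge I_n$ and the two-sided multiplicative comparison $I_n\le E_\omega(\phi_j)\le\tfrac{1}{n+1}I_n$. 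So the control is up to a constant factor, not ``up to a bounded error''. Second, in the ``if'' direction your parenthetical ``$j\mu_j$ vanishes in the limit by full mass'' is not right: full mass alone yields only $\mu_j\to 0$. To bound $j\mu_j$ you need the integrability hypothesis together with full mass, which gives $\mu_j=\MA_\omega(\phi)(\{\phi\le -j\})$ and then, by Chebyshev,
\[
j\,\mu_j\ \le\ \int_{\{\phi\le -j\}}|\phi|\,\MA_\omega(\phi)\ \le\ \int_X|\phi|\,\MA_\omega(\phi)\ <\ \infty.
\]
With these two fixes the argument is complete and matches the proof in \cite{BBGZ}.
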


We then have
\begin{thm}[Theorem 1.1 in \cite{BBEGZ}]
\label{thm:lelongnumbers}
Assume $\phi\in \PSH(\omega)$ has full Monge-Amp\`ere mass. Then $\phi$ has 
zero Lelong numbers.
\end{thm}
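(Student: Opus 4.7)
The strategy is to argue the contrapositive: assuming $\phi\in\PSH(\omega)$ has Lelong number $\nu := \nu(\phi,x_0)>0$ at some point $x_0\in X$, I will show that $\int_X \MA(\phi) < \int_X \omega^n$, so $\phi$ cannot have full Monge-Amp\`ere mass. The main handle comes from the very definition of the non-pluripolar Monge-Amp\`ere operator by canonical truncation. Setting $\phi_k := \max(\phi,-k)$, each $\phi_k$ is bounded and $\omega$-psh, so the Bedford--Taylor product $\omega_{\phi_k}^n := (\omega+dd^c\phi_k)^n$ is well defined with total mass $\int_X \omega^n$; and by construction $\MA(\phi)$ is the weak limit of $\mathbf{1}_{\{\phi>-k\}}\omega_{\phi_k}^n$. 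Hence
\[
\int_X \MA(\phi) \;=\; \int_X \omega^n \;-\; \lim_{k\to\infty}\int_{\{\phi\leq -k\}}\omega_{\phi_k}^n,
\]
and full mass is equivalent to the vanishing of the last limit. It therefore suffices to produce a uniform positive lower bound on $\int_{\{\phi\leq -k\}}\omega_{\phi_k}^n$ for all sufficiently large $k$.

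This lower bound should be local near $x_0$ and come from a comparison with a model singularity. Choose holomorphic coordinates $z$ with $z(x_0)=0$ and write $\omega = dd^c\rho$ on a ball $B_{2r}$. The Lelong hypothesis yields $\phi(z)\leq \nu\log|z|+C$ on $B_{2r}$. The plan is to compare $\phi_k+\rho$ with the local model $\psi_k := \max\bigl(\nu\log|z|+C,\,-k\bigr)+\rho$: the Bedford--Taylor mass of $(dd^c\psi_k)^n$ on $B_r$ equals $\nu^n$, and this mass is supported exactly on $\{\nu\log|z|+C\leq -k\}\subset\{\phi\leq -k\}$. Since $\phi_k+\rho\leq\psi_k$ on $B_{2r}$, an application of the Bedford--Taylor comparison principle on $B_r$ should give
\[
\int_{\{\phi\leq -k\}\cap B_r}\omega_{\phi_k}^n \;\geq\; \nu^n - \epsilon_k, \qquad \epsilon_k\to 0,
\]
from which the contradiction follows by sending $k\to\infty$.

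The main obstacle I expect is the clean execution of the comparison step, since the inequality $\phi_k+\rho\leq \psi_k$ is not an equality on $\partial B_r$, and so the textbook comparison principle does not apply off the shelf. One remedy is to glue $\psi_k$ to $\phi_k+\rho$ using a regularized maximum on a slightly larger annulus to produce a global competitor that matches $\phi_k$ outside $B_r$ and dominates $\phi_k+\rho$ inside; comparing the Monge-Amp\`ere masses of these globally defined functions then isolates the $\nu^n$ of mass coming from the model. An alternative is to invoke the refined mass-comparison inequalities from \cite{BBEGZ} that directly control the non-pluripolar defect of mass in terms of the singularity type. Either route reduces the result to the explicit local computation on $B_r$ where only the model singularity contributes, and the theorem follows.
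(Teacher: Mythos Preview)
The paper does not prove this statement at all: it is quoted verbatim as Theorem~1.1 of \cite{BBEGZ} and used as a black box in the regularity argument (Theorem~\ref{thm:regularity}). There is therefore no ``paper's own proof'' to compare your attempt against.

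That said, your outline is the standard route to this result (it is essentially the argument in \cite{GZ} and \cite{BBEGZ}): reduce full mass to $\lim_k\int_{\{\phi\le -k\}}\omega_{\phi_k}^n=0$, and contradict this by comparing locally with the truncated model $\max(\nu\log|z|+C,-k)$. Two small points. First, the total Bedford--Taylor mass of $(dd^c\psi_k)^n$ on $B_r$ is \emph{not} exactly $\nu^n$: the mixed terms $(dd^c v_k)^j\wedge\omega^{n-j}$ with $j<n$ contribute additional mass on $B_r\setminus\{\phi\le -k\}$. What is true, and sufficient, is that the pure term $(dd^c\max(\nu\log|z|,-k-C))^n$ alone carries mass $\nu^n$ concentrated on the sphere $\{\nu\log|z|+C=-k\}\subset\{\phi\le -k\}$; that is all you need. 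Second, the boundary obstruction you identify in applying the comparison principle is genuine. Your proposed gluing fix is the right idea, but note that the alternative you mention---invoking refined inequalities from \cite{BBEGZ}---is circular here, since this theorem \emph{is} one of those results. The self-contained route is either the regularized-max gluing you describe, or (more directly) Demailly's comparison theorem for generalized Lelong numbers, which gives $(dd^c(\phi+\rho))^n(\{x_0\})\ge \nu^n$ in the sense of the Demailly Monge--Amp\`ere operator for unbounded psh functions; since the non-pluripolar operator discards exactly this kind of mass, the defect follows.
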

Note that, since any $\omega$-plurisubharmonic functions is bounded from above we get that $e^{\phi}\in L^p$ for all $p$. By a classical result of Skoda \cite{Skoda}, $\phi\in \PSH(\omega)$ has zero Lelong numbers if and only if $e^{-\phi}\in L^p$ for all $p<\infty$. This allows us to apply
\begin{thm}[Theorem C in \cite{EGZ}]
\label{thm:continuity}
Let $\omega$ be a K\"ahler metric and $\mu$ be a probability measure with $L^p$-density for $p>1$. Then the unique locally bounded $\omega$-plurisubharmonic function $\phi$ such that $$\frac{\MA_{\omega}(\phi)}{\int_X \omega^n}=\mu$$ and $\int \phi \omega^n=0$ is continuous. 
\end{thm}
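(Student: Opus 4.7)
The plan is to follow the pluripotential-theoretic method of Ko\l odziej, in the streamlined form of Eyssidieux-Guedj-Zeriahi. The strategy has three steps: (i) use the $L^p$ hypothesis to derive a uniform $L^\infty$ bound for bounded weak solutions; (ii) approximate $\mu$ by smooth measures and solve the resulting Monge-Amp\`ere equations via the Calabi-Yau theorem; (iii) transfer continuity from the smooth approximants to $\phi$ through a uniform stability estimate. Existence and uniqueness of a bounded solution $\phi$ fall out of (i)--(iii), while the continuity claim is the payoff of (iii).

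First I would prove the $L^\infty$ bound. Write $\mu = f\omega^n/V$ with $V = \int_X\omega^n$ and $\|f\|_{L^p} = M$, and after translation assume $\sup_X\phi = 0$. The key inputs are H\"older's inequality
\[
\mu(E) \;\leq\; \frac{M}{V}\Big(\int_E\omega^n\Big)^{1-1/p},
\]
and Ko\l odziej's Alexander-Taylor type comparison bounding the Lebesgue measure of $E$ in terms of the Bedford-Taylor capacity $\Capa(E)$. Combined with the Chern-Levine-Nirenberg inequality
\[
t^n\Capa(\{\phi<-t-1\}) \;\leq\; \int_{\{\phi<-t\}}\MA_\omega(\phi) \;=\; V\mu(\{\phi<-t\}),
\]
these give a superlinear functional inequality for the decreasing function $h(t) := \Capa(\{\phi<-t\})^{1/n}$. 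A De Giorgi-type iteration forces $h(t) = 0$ for $t$ above some $C = C(M,n,p,\omega)$, hence $\phi \geq -C$.

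Second, I would approximate $f$ in $L^p$ by smooth positive densities $f_j$ with $\int_X f_j\omega^n = V$. By Yau's theorem there exist smooth $\phi_j \in \PSH(\omega)$ solving $\MA_\omega(\phi_j) = f_j\omega^n$ with $\int_X\phi_j\,\omega^n = 0$, and the first step applied uniformly in $j$ gives $\|\phi_j\|_\infty \leq C$. The stability estimate
\[
\|\phi_j - \phi\|_\infty \;\leq\; C\|f_j - f\|_{L^p}^{\beta}
\]
for some $\beta = \beta(n,p) > 0$ then yields uniform convergence $\phi_j \to \phi$, whence $\phi$ is a uniform limit of continuous functions and therefore continuous. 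The stability estimate itself is proved by rerunning the capacity iteration of the first step on the sublevel sets $\{\phi_j < \phi - s\}$, using the Bedford-Taylor comparison principle
\[
\int_{\{\phi_j<\phi-s\}}\MA_\omega(\phi_j) \;\leq\; \int_{\{\phi_j<\phi-s\}}\MA_\omega(\phi)
\]
to convert the difference of Monge-Amp\`ere masses into an $L^p$-difference of the densities $f_j - f$.

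The main obstacle is the stability estimate: the exponents in the capacity iteration must be chosen so that at every step the $\|f_j-f\|_{L^p}$-error can be absorbed into the nonlinear capacity term. Once stability is in hand, uniqueness of $\phi$ is a direct consequence of the bounded comparison principle applied to $\max(\phi,\phi'+\varepsilon)$ against $\phi$, and existence follows from weak-$*$ compactness of the approximants $\phi_j$ combined with the uniform $L^\infty$ bound.
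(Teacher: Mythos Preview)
The paper does not give its own proof of this statement: it is quoted verbatim as Theorem~C of \cite{EGZ} and used as a black box in the regularity argument (Theorem~\ref{thm:regularity}). So there is nothing in the paper to compare your proposal against.

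That said, your outline is a correct and standard route to the result, essentially Ko\l odziej's $L^\infty$ estimate combined with the stability estimate from the earlier Eyssidieux--Guedj--Zeriahi work on singular K\"ahler--Einstein metrics. It is worth noting, however, that the specific reference \cite{EGZ} cited here is their \emph{viscosity solutions} paper, where continuity is obtained by a different mechanism: one shows that the pluripotential solution coincides with the viscosity solution, and the latter is continuous by definition (being both a subsolution, hence u.s.c., and a supersolution, hence l.s.c.). Your pluripotential/stability approach and the viscosity approach both yield the statement; the former is more quantitative (it gives a modulus of continuity tied to $\|f_j-f\|_{L^p}$), while the latter avoids the delicate iteration in the stability estimate at the cost of importing the viscosity comparison principle.
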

The final ingredient in the proof will be
\begin{thm}[Theorem 10.1 in \cite{BBEGZ}]
\label{thm:laplaceestimate}
Let $\omega$ be a K\"ahler metric and $\mu$ be a probability measure on $X$ of the form $e^{\phi^+-\phi^-}dV$ with 
$dV$ a volume form, $\phi^\pm\in \PSH(\omega)$ and $e^{-\phi^-}\in L^p$ for some $p>1$. Assume $\phi\in \PSH(\omega)$ is bounded and solves $$\frac{\MA_{\omega}(\phi)}{\int_X \omega^n}=\mu,$$ then $\Delta \phi = O(e^{-\psi^-}).$
In particular, there is a constant $A$, depending only on $\sup \psi^+$, $\|e^{-\psi^-}\|$, $p$ and $\omega$ such that
$$ 0 \leq \omega + dd^c\phi \leq A e^{-\psi^-}\omega. $$
\end{thm}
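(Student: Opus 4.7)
The plan is to adapt Yau's classical second-order $C^2$ estimate to the present singular setting via smooth approximation of $\psi^\pm$, a maximum-principle argument with an auxiliary function carrying a $\psi^-$ weight, and a limit passage. The main obstacle will be controlling the Laplacian of the right-hand side, which a priori involves the unbounded quantity $-\Delta_\omega \psi^-$.

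First, regularize the data: by Demailly's approximation, there exist smooth $\omega$-psh functions $\psi^\pm_j$ decreasing to $\psi^\pm$ with $e^{-\psi^-_j}$ uniformly bounded in $L^p$. By Yau's solvability theorem, the regularized Monge--Amp\`ere equations
\[
(\omega+dd^c\phi_j)^n \;=\; c_j \int_X\omega^n\cdot e^{\psi^+_j-\psi^-_j}\,dV,
\]
with constants $c_j\to 1$ normalizing the right-hand side to a probability measure, admit smooth solutions $\phi_j$. Ko\l{}odziej's $L^\infty$ estimate, applied with the uniform $L^p$ bound on the density, gives $\|\phi_j\|_{L^\infty}\leq M$ uniformly in $j$, and by stability of bounded MA solutions $\phi_j \to \phi$. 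It therefore suffices to establish the bound $\omega+dd^c\phi_j \leq A\, e^{-\psi^-_j}\omega$ with constants independent of $j$ and then pass to the limit.

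For the a priori estimate, set $F_j := \log(\omega_{\phi_j}^n/\omega^n) = \psi^+_j - \psi^-_j + g_j$, where $g_j$ is smooth with uniformly bounded derivatives. Consider the auxiliary function
\[
H_j \;:=\; \log \mathrm{tr}_\omega \omega_{\phi_j} + \psi^-_j - A\phi_j,
\]
for a constant $A>B_0+2$, where $B_0$ is a lower bound on the holomorphic bisectional curvature of $\omega$. Since $H_j$ is smooth, it attains its maximum at some $x_j\in X$. The Aubin--Yau--Siu inequality gives
\[
\Delta_{\omega_{\phi_j}}\log \mathrm{tr}_\omega \omega_{\phi_j}\;\geq\; \frac{\Delta_\omega F_j}{\mathrm{tr}_\omega\omega_{\phi_j}} - B_0\, \mathrm{tr}_{\omega_{\phi_j}}\omega,
\]
and together with $\Delta_{\omega_{\phi_j}}\psi^-_j\geq -\mathrm{tr}_{\omega_{\phi_j}}\omega$ (from $\omega+dd^c\psi^-_j\geq 0$) and $\Delta_{\omega_{\phi_j}}\phi_j = n - \mathrm{tr}_{\omega_{\phi_j}}\omega$, the inequality $\Delta_{\omega_{\phi_j}}H_j(x_j)\leq 0$ at the maximum yields
\[
(A-B_0-1)\,\mathrm{tr}_{\omega_{\phi_j}}\omega(x_j) \;\leq\; An - \frac{\Delta_\omega F_j(x_j)}{\mathrm{tr}_\omega\omega_{\phi_j}(x_j)}.
\]

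The principal obstacle is to control $\Delta_\omega F_j = \Delta_\omega\psi^+_j - \Delta_\omega \psi^-_j + \Delta_\omega g_j$: while $\Delta_\omega\psi^+_j\geq -n$ and $\Delta_\omega g_j$ is bounded, the term $-\Delta_\omega \psi^-_j$ has no uniform lower bound because $\psi^-$ may be very singular. The resolution, following the arguments in \cite{BBEGZ}, exploits the elementary eigenvalue inequality
\[
\mathrm{tr}_\omega \omega_{\phi_j}\;\leq\; n\, e^{F_j}\,(\mathrm{tr}_{\omega_{\phi_j}}\omega)^{n-1},
\]
which converts a bound on $\mathrm{tr}_{\omega_{\phi_j}}\omega$ into one on $\mathrm{tr}_\omega \omega_{\phi_j}$ weighted by $e^{F_j}=O(e^{\psi^+_j-\psi^-_j})$. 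Combining this with the maximum-principle inequality above and absorbing the singular part of $-\Delta_\omega\psi^-_j$ through the interplay of $\mathrm{tr}_\omega \omega_{\phi_j}$ and $e^{-\psi^-_j}$ in a Moser-type boosting argument on $L^q$ norms of $\mathrm{tr}_\omega\omega_{\phi_j}$, one obtains a uniform bound $\sup_X H_j \leq \log A'$. This translates to $\mathrm{tr}_\omega \omega_{\phi_j}\leq A''\, e^{-\psi^-_j}$ uniformly in $j$, and passing to the limit $j\to\infty$ (using $\psi^-_j\searrow\psi^-$ and $\phi_j\to\phi$) yields $\omega+dd^c\phi\leq A\, e^{-\psi^-}\omega$ as stated.
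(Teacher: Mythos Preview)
This statement is not proved in the paper; it is cited as Theorem~10.1 of \cite{BBEGZ} and used as a black box in the regularity argument (Theorem~\ref{thm:regularity}). There is therefore no proof in the present paper to compare your attempt against.

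Assessed on its own merits, your overall scheme matches the framework of \cite{BBEGZ} (which follows P\u{a}un): Demailly regularization of $\psi^\pm$, Yau's solvability for the approximating equations, uniform $C^0$ bounds via Ko\l{}odziej, a Laplacian estimate with the auxiliary function $H_j=\log\mathrm{tr}_\omega\omega_{\phi_j}+\psi^-_j-A\phi_j$, and passage to the limit. You also correctly isolate the real difficulty, namely that $-\Delta_\omega\psi^-_j$ admits no uniform lower bound.

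The gap is in your resolution of that difficulty. The final paragraph appeals to ``a Moser-type boosting argument on $L^q$ norms of $\mathrm{tr}_\omega\omega_{\phi_j}$'' to absorb the singular term; this is neither what \cite{BBEGZ} does nor a valid argument as written. In \cite{BBEGZ} the estimate is obtained purely by the maximum principle, with no integral iteration. The entire purpose of inserting $\psi^-_j$ into $H_j$ is that its contribution $\Delta_{\omega_{\phi_j}}\psi^-_j=\mathrm{tr}_{\omega_{\phi_j}}(dd^c\psi^-_j)$ compensates the dangerous part of the Ricci term \emph{pointwise}, once one uses the correct form of the second-order inequality---one that keeps $dd^c\psi^-_j$ as a $(1,1)$-form contracted with $\omega_{\phi_j}^{-1}$ rather than collapsing it prematurely to the scalar $\Delta_\omega\psi^-_j$. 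Your displayed inequality at the maximum still carries the uncontrolled quantity $-\Delta_\omega F_j(x_j)/\mathrm{tr}_\omega\omega_{\phi_j}(x_j)$; the eigenvalue inequality you then invoke converts a bound on $\mathrm{tr}_{\omega_{\phi_j}}\omega$ into one on $\mathrm{tr}_\omega\omega_{\phi_j}$ but does nothing to eliminate that term. As written, the argument does not close.
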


\begin{proof}[Proof of Theorem~\ref{thm:regularity}]
By Proposition~\ref{prop:fullmass} and Theorem~\ref{thm:lelongnumbers} $\phi_i$ has zero Lelong numbers for all $i$. Equivalently $e^{\lambda\phi_i}\in L^p$ for all $p<\infty$ and $i$. We conclude that $$ e^{\lambda \sum \phi_i} \in L^p$$
for all $p<\infty$. By Theorem~\ref{thm:continuity} $\phi_i$ is continuous, and thus bounded, for all $i$. This means we may apply Theorem~\ref{thm:laplaceestimate} to get that there is a constant $A$ such that 
$$ 0 < \theta_i + dd^c \phi_i \leq A\theta_i. $$
The theorem then follows from general theory for elliptic partial differential equations. See the estimate in Theorem~1.2, Remark~1.4 in \cite{WangY} and the bootstrapping technique explained in Section~5.3 of \cite{Blocki} for details.
\end{proof}

\begin{proof}[Proof of Theorem~\ref{thm:solutionminimizer}]
By Proposition~\ref{prop:weaksolutionminimizer} any minimizer $(\phi_i)$ of $D$ is a weak solution of \eqref{eq:MA}. Theorem~\ref{thm:regularity} says that any weak solution of \eqref{eq:MA} is a classical solution which by Lemma~\ref{lemma:MAeq} means that $(dd^c\phi_i+\theta_i)$ is cKE. 
\end{proof}

\section{Existence and uniqueness when $K_X$ is ample}
\label{sec:kxample}
In this section we will prove Theorem A. By the variational principle set up in the previous section this reduces to proving that the Ding functional associated to any decomposition of $-c_1(X)$ admits a unique minimizer. We begin with

\begin{lem}
\label{lemma:dingbasic}
Assume $(\alpha_i)$ is a decomposition of $-c_1(X)$ and $(\theta_i)$ a $k$-tuple of K\"ahler metrics such that $([\theta_i])=(\alpha_i)$. Then 
the associated Ding functional is lower semicontinuous, convex and translation invariant. 
\end{lem}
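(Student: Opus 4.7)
The plan is to verify the three properties separately; each is a direct consequence of facts already collected in Section~\ref{sect:energy} together with one standard inequality.

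\emph{Translation invariance.} I would directly compute $D(\phi_1+c_1,\ldots,\phi_k+c_k)$ for arbitrary constants $c_i\in\R$. The identity $E_{\theta_i}(\phi_i+c_i)=E_{\theta_i}(\phi_i)+c_i|\alpha_i|$ recalled in Section~\ref{sect:energy} shifts the first sum in \eqref{eq:Ding} by $-\sum c_i$; since $\lambda=-1$, the exponent $-\lambda\sum(\phi_i+c_i)=\sum\phi_i+\sum c_i$ factors out the constant $e^{\sum c_i}$ from the integral, so the log-term shifts by $+\sum c_i$. The two contributions cancel exactly.

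\emph{Convexity.} I would treat the two terms of \eqref{eq:Ding} separately. Each $E_{\theta_i}$ is concave on the convex set $\cE^1(\theta_i)$ (Section~\ref{sect:energy}, via Proposition~2.1 in \cite{BBGZ}), so the first term is convex. For the log-integral term, given $\phi_i^{(t)}=(1-t)\phi_i^{(0)}+t\phi_i^{(1)}$, the factorization $e^{\sum\phi_i^{(t)}}=(e^{\sum\phi_i^{(0)}})^{1-t}(e^{\sum\phi_i^{(1)}})^{t}$ combined with H\"older's inequality applied with conjugate exponents $1/(1-t),\,1/t$ yields
$$\int_X e^{\sum\phi_i^{(t)}}\,\omega_0^n \;\leq\; \Bigl(\int_X e^{\sum\phi_i^{(0)}}\,\omega_0^n\Bigr)^{1-t}\Bigl(\int_X e^{\sum\phi_i^{(1)}}\,\omega_0^n\Bigr)^{t},$$
and taking logarithms produces the desired convexity in $(\phi_i)$.

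\emph{Lower semicontinuity.} Suppose $(\phi_i^{(n)})\to(\phi_i)$ in $\prod_i L^1(X)$. Upper semicontinuity of each $E_{\theta_i}$ on $\PSH(\theta_i)$ in the $L^1$-topology (standard; see \cite{BBGZ}) makes the first sum in \eqref{eq:Ding} lower semicontinuous. I claim that the log-integral term is in fact \emph{continuous}. This is the one step that uses the sign $\lambda=-1$ in an essential way: the exponent $\sum\phi_i^{(n)}$ is uniformly bounded above, because an $L^1$-convergent sequence of $\theta_i$-psh functions has uniformly bounded supremum (the supremum and the $L^1$-mean of a quasi-psh function differ by a constant depending only on $(X,\theta_i)$). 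Hence $e^{\sum\phi_i^{(n)}}$ is dominated by a constant, and passing to a subsequence converging pointwise almost everywhere and applying dominated convergence gives $\int e^{\sum\phi_i^{(n)}}\omega_0^n\to\int e^{\sum\phi_i}\omega_0^n$; a standard subsubsequence argument promotes this to convergence of the full sequence, after which continuity of $\log$ finishes the claim. This continuity of the log-integral is the only genuinely nontrivial step in the whole proof and is what I would expect to be the main obstacle; it is precisely the step that fails in the Fano case, where the opposite sign in the exponent makes $e^{-\sum\phi_i}$ unbounded and only upper semicontinuity of the log-term survives.
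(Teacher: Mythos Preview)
Your proposal is correct and follows essentially the same approach as the paper: concavity and upper semicontinuity of each $E_{\theta_i}$ handle the energy term, H\"older's inequality gives convexity of the log-integral, and dominated convergence (exploiting the uniform upper bound on $\theta_i$-psh functions along an $L^1$-convergent sequence, which is possible precisely because $\lambda=-1$) gives continuity of the log-integral. Your treatment of the continuity step is in fact slightly more careful than the paper's, which simply invokes dominated convergence without spelling out the uniform bound or the a.e.-convergent-subsequence argument.
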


\begin{proof}
As mentioned in Section~\ref{sect:energy} $E_{\theta_i}$ is concave and upper semicontinuous on $\PSH(\theta_i)$. Moreover, since any $\theta_i$-plurisubharmonic function is bounded from above
$$ \log\int_X e^{\sum\phi_i}\omega_0^n $$
is continuous by the Dominated Convergence Theorem. By H\"older's inequality it is also convex. This proves that $D$ is convex and lower semicontinuous. Translation invariance follows from $E_{\theta_i}(\phi+C)=E_{\theta_i}(\phi)+C|\alpha_i|$ and 
$$ \log \int_X e^{\sum \phi_i+C} \omega_0^n = \log \int_X e^{\sum \phi_i} \omega_0^n+C. $$
\end{proof}

The crucial point in the proof of Theorem A is the following coercivity estimate
\begin{lem}
\label{lemma:coercivity}
Let $(\alpha_i)$ be a decomposition of $-c_1(X)$ and $(\theta_i)$ be a $k$-tuple of K\"ahler metrics such that $([\theta_i])=(\alpha_i)$. Then there is a constant $C$ such that for all $(\phi_i)\in \prod_i \cE(\theta_i)$ satisfying
\begin{equation}
\label{eq:normalization}
\sup_X\phi_1 = \ldots = \sup_X \phi_k = 0.
\end{equation}
we have 
$$ D((\phi_i)) \geq -\frac{1}{|\alpha_j|}E_j(\phi_j) - C $$
for all $j$.
\end{lem}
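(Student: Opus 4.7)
The plan is to exploit the fact that $K_X$ ample means $\lambda=-1$, so the Ding functional takes the favorable form
\begin{equation*}
D(\phi_1,\ldots,\phi_k) \;=\; -\sum_i \frac{1}{|\alpha_i|}E_{\theta_i}(\phi_i) \;+\; \log \int_X e^{\sum_i \phi_i}\,\omega_0^n,
\end{equation*}
in which the log-term appears with a $+$ sign. The strategy is to keep the $j$-th energy term as is, show that the other energy terms contribute non-negatively under the normalization, and give a uniform lower bound on the log-integral. This decomposes the argument into three short steps.

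First, since $\sup_X \phi_i = 0$ forces $\phi_i \leq 0$, monotonicity of $E_{\theta_i}$ (inherited from the smooth case via the defining infimum in Section~\ref{sect:energy}) together with the trivial computation $E_{\theta_i}(0) = 0$ gives $E_{\theta_i}(\phi_i) \leq 0$. Hence for every fixed $j$,
\begin{equation*}
-\sum_{i\neq j}\frac{1}{|\alpha_i|}E_{\theta_i}(\phi_i) \;\geq\; 0,
\end{equation*}
and this block of terms can simply be dropped from below.

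Second, because $\omega_0^n$ is a probability measure, concavity of $\log$ and Jensen's inequality yield
\begin{equation*}
\log \int_X e^{\sum_i \phi_i}\,\omega_0^n \;\geq\; \sum_i \int_X \phi_i\,\omega_0^n.
\end{equation*}
Third, I invoke the classical fact from pluripotential theory that, for any K\"ahler form $\theta$, the normalized family $\{\phi \in \PSH(\theta) : \sup_X \phi = 0\}$ is compact in $L^1(X)$; since $\phi \mapsto \int_X \phi\,\omega_0^n$ is continuous on $L^1(X)$, there is a constant $A = A(\theta_i,\omega_0) > 0$ with $\int_X \phi_i\,\omega_0^n \geq -A$ uniformly over all such $\phi_i$. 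Combining the three observations,
\begin{equation*}
D(\phi_1,\ldots,\phi_k) \;\geq\; -\frac{1}{|\alpha_j|}E_{\theta_j}(\phi_j) \;+\; \sum_i \int_X \phi_i\,\omega_0^n \;\geq\; -\frac{1}{|\alpha_j|}E_{\theta_j}(\phi_j) - kA,
\end{equation*}
which is the claimed estimate with $C := kA$.

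There is no genuine obstacle here: the argument is essentially a sign check plus two well-known facts. The real content is that it works \emph{only} because $\lambda = -1$; in the Fano case $\lambda = +1$ the log-integral enters with the opposite sign and Jensen's inequality gives a bound of the wrong direction, so coercivity fails in general and must be replaced by a K-stability hypothesis. This is precisely the structural reason why Theorem~A is much easier than the Fano existence results.
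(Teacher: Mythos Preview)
Your proof is correct and follows essentially the same approach as the paper: drop the non-negative energy terms $-E_{\theta_i}(\phi_i)/|\alpha_i|$ for $i\neq j$ using the normalization, and bound the log-integral from below via the $L^1$-compactness of sup-normalized $\theta_i$-psh functions. The only cosmetic difference is that the paper bounds $\log\int_X e^{\sum\phi_i}\omega_0^n$ directly by its continuity on the compact set $\prod_i\{\phi_i\in\PSH(\theta_i):\sup\phi_i=0\}$, whereas you first insert a Jensen step and then bound each linear integral $\int_X\phi_i\,\omega_0^n$ separately; both routes are equally short and rest on the same compactness fact.
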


\begin{proof}
As explained in the proof of Lemma~\ref{lemma:dingbasic}
\begin{equation} 
\label{eq:volumeterm}
\log\int_X e^{\sum \phi_i}\omega_0^n 
\end{equation}
is continuous in $(\phi_i)$. Moreover, by standard properties of plurisubharmonic functions
\begin{equation}
\label{eq:normalizedpsh}
\prod_i \PSH(\theta_i)\cap \{\sup \phi_1 = \ldots = \sup \phi_k = 0\}
\end{equation}
is compact. This means \eqref{eq:volumeterm} can be bounded from below on \eqref{eq:normalizedpsh} by some constant $-C$. By \eqref{eq:normalization}, $E_{\theta_i}(\phi_i)\leq 0$ for all $i$. We conclude that  
$$ D((\phi_i)) \geq -\frac{1}{|\alpha_j|}E_j(\phi_j) - C  $$
for all $j$.
\end{proof}

We are now ready for
\begin{proof}[Proof of Theorem A]
Let 
$$(\phi^j_i)=(\phi_i^j)_{1\leq i\leq k}^{1\leq j < \infty} $$ 
be a sequence in $\prod \cE^1(\theta_i)$ such that $ D((\phi_i^j)) $ is decreasing in $j$ and tends to $\inf_{\prod \cE^1(\theta_i)} D$ as $j\rightarrow \infty$. Any function in $\PSH(\theta_i)$ is bounded from above. By translation invariance we may assume that \eqref{eq:normalization} holds for all $i$ and $j$.  Since $D((\phi_i^j))$ is decreasing it is bounded from above by some constant $C$. By Lemma~\ref{lemma:coercivity} there is another constant $C'$ such that 
$$ D((\phi_i^j)) \geq -\frac{1}{|\alpha_l|} E_l(\phi_i^j) - C' $$
for all $l$ and $j$. This means
$$  \frac{1}{|\alpha_l|}E_l(\phi_i^j) \geq -C-C' $$
for all $l$ and $j$ and consequently, for all $j$, $(\phi_i^j)$ is in
$$ \prod_i\left\{ \phi\in \cE^1(\theta_i): \frac{1}{|\alpha_i|}E_{\theta_i}(\phi) > - C-C', \sup \phi = 0\right\}$$
which by Lemma~\ref{lemma:compactness} is compact. We may thus extract a subsequence of $(\phi_i^j)$ that converges to some $(\phi_i)\in \prod_i \cE^1(\theta_i)$. By lower semicontinuity of $D$, $(\phi_i)$ is a minimizer of $D$. By Theorem~\ref{thm:solutionminimizer} this means $(dd^c\phi_i+\theta_i)$ is cKE. 

For uniqueness, assume $(\omega_i)$ and $(\omega_i')$ are cKE and $([\omega_i])=([\omega_i'])=(\alpha_i)$. Then $(\omega_i) = (dd^c\phi_i+\theta_i)$ and $(\omega_i')=(dd^c\phi_i'+\theta_i)$ for some $(\phi_i),(\phi_i')\in \prod_i\PSH(\theta_i)$ which are solutions to \eqref{eq:MA} by Lemma~\ref{lemma:MAeq} and smooth by Theorem~\ref{thm:regularity}. We may choose $(\phi_i)$ and $(\phi_i')$ so that
$$ \int e^{\sum \phi_i}\omega_0^n = \int e^{\sum \phi_i'}\omega_0^n = 1. $$
Let $\phi_i(t)=t\phi_i+(1-t)\phi_i'$. Then $D((\phi_i(t)))$ is convex in $t$ by Lemma~\ref{lemma:dingbasic}. Moreover, by \eqref{eq:Ederivative} we have, since $(\phi_i')$ is a solution of \eqref{eq:MA}, that 
\begin{eqnarray} 
\left.\frac{d}{dt} \right|_{t=0} D((\phi_i(t))) & = & -\sum_i \frac{1}{|\alpha_i|}\int_X (\phi_i-\phi_i') \MA(\phi_i') + \int_X \sum_i(\phi_i-\phi_i')e^{\sum \phi_i'} \omega_0^n \nonumber \\
& = & \sum_i \int_X (\phi_i-\phi_i') \left(-\frac{1}{|\alpha_i|}\MA(\phi_i')+e^{\sum \phi_i'} \omega_0^n\right) = 0. \nonumber
\end{eqnarray}
Similarly, since $(\phi_i)$ is also a solution of \eqref{eq:MA},  $\left.\frac{d}{dt}\right|_{t=1} D((\phi_i(t))) = 0$. Together this implies that $D((\phi_i(t)))$ is constant for $t\in [0,1]$. Since $D$ is a sum of convex terms this means each term has to be affine. In particular $E_{\theta_i}(\phi_i(t))$ is affine in $t$ for each $i$. By Lemma~\ref{lemma:Eaffine} this means $\phi_i = \phi_i'+C$ and thus $(\omega_i)=(\omega_i')$.  
\end{proof}

\begin{remark}
After a preprint of this paper appeared on arXiv an alternative proof of Theorem A using the continuity method was provided by Pingali \cite{Pingali}. Similarly as for usual K\"ahler-Einstein metrics, he also reduces the existence in the Fano case to a $C^0$-estimate. 
\end{remark}




\section{Uniqueness up to automorphisms when $X$ is Fano}
\label{sect:uniqueness}
In this section we prove Theorem~B. The proof is based on a result by Berndtsson stating that a certain functional on $c_1(X)$ is essentially strictly convex along geodesics. We begin by recalling the definition of geodesic segments and geodesic rays in the space of K\"ahler metrics.  

\begin{definition}
Assume $\theta$ is a K\"ahler form on $X$. Let $\Delta^*$ be the punctured unit disc in $\C$ and $A\subset \Delta^*$ be the annulus of outer radius 1 and inner radius $e^{-1}$. Abusing notation we will identify $\theta$ with its lift to the product $X\times A$ (or $X\times \Delta^*$). Let $\phi$ be an upper semi-continuous, locally integrable, $S^1$-invariant function on $X\times A$ (or $X\times \Delta^*$). Let $\Omega$ denote the $(1,1)$-form on $X\times A$ (or $X\times \Delta^*$) given by $\theta+dd^c\phi$ and for each $\tau\in A$ (or $\tau\in \Delta^*$), let $\phi(\tau)$ denote the restriction of $\phi$ to $X\times\{\tau\}$ and $\omega(\tau)$ denote the $(1,1)$-form $dd^c\phi+\theta$ on $X\times\{\tau\}$. Then $\phi$ is a weak geodesic segment (or a weak geodesic ray) in $\PSH(\theta)$ if
\begin{equation} \label{eq:subgeod} \omega(\tau) \geq 0 \end{equation}
for all $\tau$ and
\begin{equation} \label{eq:geod} \Omega^{n+1}=0. \end{equation}
We say that $\phi$ is a subgeodesic if \eqref{eq:subgeod} but not necessarily \eqref{eq:geod} holds. 
\end{definition}
In either case, since $\phi$ is assumed to be $S^1$-invariant we will also use the real logarithmic notation $\phi^t=\phi(\tau)$ where $t=-\log|\tau|^2$.

Let $\psi^0$ and $\psi^1$ be two smooth elements in $\PSH(\theta)$. By an envelope construction there is a unique weak geodesic segment $\psi$ such that $\phi^0=\psi^0$ and $\phi^1=\psi^1$. Although $\phi$ may not be smooth it is guaranteed, by a theorem of Chen \cite{Chen}, with contributions from Blocki \cite{}, that all mixed complex second derivatives of $\phi$ are bounded. 

We have the following 
\begin{lem}[Proposition~6.2 in \cite{BBGZ}] 
\label{lemma:eaffine}
Let $\phi$ be a locally bounded weak geodesic segment or a weak geodesic ray. Then the function 
$$ t\mapsto E_\theta(\phi^t) $$
is affine.
\end{lem}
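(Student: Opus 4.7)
The plan is to reduce the claim to the fundamental push-forward identity
\begin{equation}
\pi_*\, \Omega^{n+1} \;=\; (n+1)\, dd^c_\tau\bigl(E_\theta(\phi^t)\bigr) \label{plan:pushforward}
\end{equation}
as currents on $A$ (respectively on $\Delta^*$), where $\pi$ denotes projection onto the $\tau$ factor and $t=-\log|\tau|^2$. Granted \eqref{plan:pushforward}, the weak geodesic equation $\Omega^{n+1}=0$ forces $\tau\mapsto E_\theta(\phi^t)$ to be pluriharmonic. Since $\phi$ is $S^1$-invariant, so is $E_\theta(\phi^t)$, and an $S^1$-invariant pluriharmonic function on an annulus is necessarily of the form $a + b\log|\tau|^2$, hence affine in $t$. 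The weak geodesic ray case reduces to the segment case by restricting to annuli $\{e^{-T}\le|\tau|\le 1\}$ for arbitrary $T>0$.

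To prove \eqref{plan:pushforward} I would first treat the smooth case by direct computation. Writing
$$\Omega = \omega^t + d_X\dot\phi^t\wedge d^c t + dt\wedge d^c_X\dot\phi^t + \ddot\phi^t\, dt\wedge d^c t,$$
and noting that the only top-bidegree contributions to $\Omega^{n+1}$ are those containing the factor $dt\wedge d^c t$ exactly once, expansion yields
$$\Omega^{n+1} = (n+1)\Bigl(\ddot\phi^t(\omega^t)^n - n\, d_X\dot\phi^t\wedge d^c_X\dot\phi^t\wedge(\omega^t)^{n-1}\Bigr)\wedge dt\wedge d^c t.$$
On the other hand, differentiating \eqref{eq:Ederivative} a second time and integrating by parts in the $X$-direction gives
$$\frac{d^2}{dt^2}E_\theta(\phi^t) = \int_X \ddot\phi^t(\omega^t)^n - n\int_X d_X\dot\phi^t\wedge d^c_X\dot\phi^t\wedge(\omega^t)^{n-1}.$$
Combining these two displays and using $dd^c t = 0$ on $A$ yields \eqref{plan:pushforward} in the smooth case. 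For a merely locally bounded $\phi$, I would approximate it by a decreasing sequence of smooth, $S^1$-invariant, $\theta$-plurisubharmonic functions $\phi_j$ on $X\times A'$ for slightly smaller annuli $A'\Subset A$ (obtained by Demailly-type regularization, averaged against the $S^1$-action to preserve invariance), apply \eqref{plan:pushforward} to each $\phi_j$, and pass to the limit using Bedford--Taylor continuity of Monge--Amp\`ere operators along decreasing locally uniformly bounded sequences together with the corresponding continuity of $E_\theta$ recalled in Section~\ref{sect:energy}.

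The principal obstacle is the limiting step: one must justify the commutation of the push-forward $\pi_*$ with the weak convergence $\Omega_j^{n+1}\to\Omega^{n+1}$, and ensure the $S^1$-equivariant smoothings have the required uniform local bounds on $X\times A'$. Both points are standard within pluripotential theory, and once \eqref{plan:pushforward} is in place for locally bounded $\phi$ the weak geodesic equation immediately yields pluriharmonicity and hence affinity of $t\mapsto E_\theta(\phi^t)$.
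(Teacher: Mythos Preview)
Your argument is correct and is essentially the standard one: the push-forward identity $\pi_*\Omega^{n+1}=(n+1)\,dd^c_\tau E_\theta(\phi^t)$ reduces affinity of the energy to the homogeneous Monge--Amp\`ere equation, and the passage from smooth to locally bounded $\phi$ via monotone regularization and Bedford--Taylor continuity is exactly how it is done in \cite{BBGZ}. Note, however, that the present paper does not itself prove this lemma; it is simply quoted as Proposition~6.2 of \cite{BBGZ}, so there is no in-paper proof to compare against---your sketch is in effect a reconstruction of the cited argument.
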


The main ingredient in the proof of Theorem~B will be the following convexity theorem by Berndtsson. 
\begin{thm}[Theorem~1.1, Theorem~1.2 and Lemma 4.3 in \cite{Berndtsson}]
\label{thm:fanoconvex}
Assume $X$ is Fano, $\theta$ is a K\"ahler form in $c_1(X)$ and $\phi$ is a locally bounded subgeodesic in $\PSH(\theta)$. Let $\omega_\theta$ be a K\"ahler form on $X$ such that $\Ric \omega_\theta = \theta$. Then the function $$\mathcal{F}(t):=-\log\int_X e^{-\phi^t}\omega_\theta^n$$ 
is convex in $t$. If $\mathcal{F}(t)$ is affine then there exists a holomorphic vector field $V$ on $X$ such that 
$$ \left.\left(V-\frac{\partial}{\partial \tau}\right)\right\rfloor \Omega = 0. $$
\end{thm}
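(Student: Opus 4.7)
The plan is to identify $e^{-\mathcal{F}(t)} = \int_X e^{-\phi^t}\omega_\theta^n$ as the squared $L^2$-norm of a distinguished section of a trivial direct image bundle, and then to invoke Berndtsson's theorem on positivity of direct images. First I would reduce to the case where $\phi$ is smooth and strictly positively curved on $X\times A$: a locally bounded subgeodesic can be approximated from above by a decreasing sequence of such metrics (for instance by regularizing in $\tau$ and using $\theta$-psh maximum envelopes), and $\mathcal{F}$ behaves well under such limits by dominated convergence, so convexity of the limit follows from convexity of the smooth approximants.

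For smooth $\phi$, one exploits the condition $\Ric\omega_\theta = \theta$: the volume form $\omega_\theta^n$ determines a smooth hermitian metric $h_0$ on $-K_X$ whose Chern curvature is exactly $\theta$. Pulling this back to $X\times A$ and twisting by $e^{-\phi}$ yields a hermitian metric $h_\phi$ on the pullback of $-K_X$ with curvature $\Omega = \theta + dd^c\phi \geq 0$. Under the canonical identification
$$ K_{X\times A/A}\otimes(-K_{X\times A/A}) \;\cong\; \mathcal{O}_{X\times A}, $$
the constant section $1$ has fiberwise squared $L^2$-norm $\int_X e^{-\phi^t}\omega_\theta^n$, so $-\log\|1\|^2_\tau = \mathcal{F}(t)$.

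Now I would apply Berndtsson's theorem: when $L$ is a line bundle on $X\times A$ equipped with a semi-positively curved hermitian metric, the induced $L^2$-metric on the direct image $p_*(K_{X\times A/A}\otimes L)$ is Nakano semi-positive. Applied with $L = -K_{X\times A/A}$ carrying the metric $h_\phi$, the direct image is the trivial line bundle $\mathcal{O}_A$, and positivity of its curvature reads $dd^c_\tau\mathcal{F} \geq 0$. Since $\phi$ is $S^1$-invariant, so is $\mathcal{F}$ as a function of $\tau$, and subharmonicity in $\tau$ of an $S^1$-invariant function is equivalent to convexity in $t = -\log|\tau|^2$.

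The equality case is the genuinely hard part. If $\mathcal{F}$ is affine, then Berndtsson's curvature inequality is saturated and the section $1$ lies in the kernel of a certain twisted $\bar\partial$-operator; from this one must extract a \emph{global} lift of $\partial/\partial\tau$ to a holomorphic vector field $\tilde V$ on $X\times A$ satisfying $\tilde V \rfloor \Omega = 0$. Setting $V := \tilde V - \partial/\partial\tau$ (vertical, hence after $S^1$-averaging a holomorphic field on $X$) yields the stated identity. Making this rigorous requires pushing the underlying H\"ormander estimate to its equality case and interpreting the vanishing obstruction as a horizontal holomorphic deformation, which is precisely the content of Lemma~4.3 in \cite{Berndtsson}.
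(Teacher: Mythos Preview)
The paper does not prove this statement at all; it is quoted verbatim as a result of Berndtsson (Theorems~1.1, 1.2 and Lemma~4.3 in \cite{Berndtsson}) and used as a black box in the proofs of Theorem~B and Theorem~D. So there is no ``paper's own proof'' to compare against.

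That said, your outline is a faithful sketch of Berndtsson's actual argument: the identification of $\int_X e^{-\phi^t}\omega_\theta^n$ with the squared $L^2$-norm of the constant section of $p_*(K_{X\times A/A}\otimes(-K_{X\times A/A}))$ via the condition $\Ric\omega_\theta=\theta$, followed by his positivity-of-direct-images theorem, is exactly how the convexity is obtained. One remark: the approximation step you propose for the convexity part is not needed, since Berndtsson's curvature inequality already applies to locally bounded (even singular) metrics; more importantly, approximation would not help with the equality case, because affinity of the limiting $\mathcal{F}$ does not force affinity of the approximants. You seem aware of this, since you treat the equality case separately by invoking Lemma~4.3 of \cite{Berndtsson} directly, which is the right move. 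Your description of extracting the holomorphic vector field is slightly imprecise (one must argue that the vertical part $\tilde V - \partial/\partial\tau$ is actually independent of $\tau$ and descends to a holomorphic field on $X$, which uses $S^1$-invariance and the fact that holomorphic vector fields on a compact $X$ form a finite-dimensional space), but since you explicitly defer this to Berndtsson's lemma, the sketch is acceptable.
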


Assume $X$ is Fano and $\theta_1,\ldots, \theta_k$ are K\"ahler forms on $X$. Let $(\psi^0_i)$ and $(\psi^1_i)$ be two $k$-tuples in $\prod \PSH(\theta_i)$. Then there is a $k$-tuple of weak geodesic segments $(\phi_i)$ such that $\phi_i^0=\psi^0$ and $\phi_i^1=\psi_i^1$ for all $i$. We get $k$-tuples of $(1,1)$-forms $(\Omega_i)$ and $(\omega^t_i)$ on $X\times A$ and $X\times \{\tau\}$ respectively. 
\begin{lem}
\label{lemma:flowdec}
Assume $X$ is Fano and $(\phi_i)$ is a k-tuple of weak geodesic segments such that $\phi_i^0$ and $\phi_i^1$ are smooth for all $i$. 
Assume also that 
\begin{equation} \left.\left(V-\frac{\partial}{\partial \tau}\right)\right\rfloor \sum_i\Omega_i \label{eq:contraction} \end{equation}
for some holomorphic vector field $V$ on $X$. 
Then 
\begin{equation} F_t^*(\omega_i^t) = \omega_i^0 \label{eq:flowlemmai} \end{equation}
for all $i$ and $t$.
\end{lem}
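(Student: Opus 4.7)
The plan is to decouple the hypothesis on the sum to obtain the analogous contraction identity for each $\Omega_i$ individually, and then to integrate this infinitesimal identity to the flow identity \eqref{eq:flowlemmai}.

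\textbf{Decoupling the contraction.} The key observation is a pointwise linear-algebra fact. Each $\Omega_i = \theta_i + dd^c\phi_i$ is a closed, semipositive $(1,1)$-form on $X\times A$, and the hypothesis \eqref{eq:contraction} states that $\sum_i \Omega_i(W,\bar Y)=0$ for every $(1,0)$-vector $Y$, where $W := V - \partial/\partial\tau$. Specializing to $Y=W$, each summand in $\sum_i \Omega_i(W,\bar W) = 0$ is non-negative, so each $\Omega_i(W,\bar W) = 0$. Cauchy--Schwarz for the positive semidefinite Hermitian form associated to $\Omega_i$ then forces $\Omega_i(W,\bar Y) = 0$ for every $Y$, that is,
$$W \rfloor \Omega_i = 0 \qquad (i = 1,\dots,k).$$

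\textbf{From infinitesimal invariance to a global pullback identity.} Each $\Omega_i$ is $d$-closed, so Cartan's formula yields $L_W \Omega_i = d(W \rfloor \Omega_i) = 0$, and the same holds for the real part of the flow. Thus $\Omega_i$ is invariant under the flow generated by $W$ on $X\times A$. In the logarithmic coordinate $t = -\log|\tau|^2$ this flow combines translation in $t$ with the one-parameter family $F_t \in \Aut_0(X)$ generated by $V$, sending the fibre $X\times\{1\}$ to $X\times\{\tau\}$ with $|\tau|^2 = e^{-t}$ via $F_{-t}$. Restricting the invariance of $\Omega_i$ to fibres therefore gives $F_t^*\omega_i^t = \omega_i^0$ for each $i$ and each $t$, which is exactly \eqref{eq:flowlemmai}.

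\textbf{Main obstacle.} The principal technical difficulty is the low regularity of the weak geodesics: by Chen's theorem $\phi_i$ is only $C^{1,\bar 1}$, so Cartan calculus on $\Omega_i$ must be justified carefully. I would handle this either by approximating each $\phi_i$ from above by smooth subgeodesics, deriving the identity for the approximants, and passing to the limit; or by using the smoothness of the endpoints $\phi_i^0,\phi_i^1$ together with uniqueness of the weak geodesic with prescribed endpoints to verify directly that $t\mapsto (F_t)^*\omega_i^0 - \theta_i$ is a weak geodesic in $\PSH(\theta_i)$ with the correct boundary values. Either way, the conceptual heart of the proof is the positivity-based decoupling in the first step; the rest is standard Cartan calculus together with the correspondence between $S^1$-invariant $(1,1)$-forms on $X\times A$ and paths in $\PSH(\theta_i)$.
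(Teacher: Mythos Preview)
Your proof follows the same two-step strategy as the paper: first decouple the contraction hypothesis on $\sum_i\Omega_i$ to each $\Omega_i$ via positivity (the paper just says ``by positivity''; your Cauchy--Schwarz argument spells this out), then apply Cartan's formula to pass from $W\rfloor\Omega_i=0$ to invariance of $\Omega_i$ under the flow and hence to \eqref{eq:flowlemmai}. The one substantive difference is how the regularity issue is dispatched. The paper does not approximate or invoke uniqueness of geodesics; instead it observes that $W\rfloor\Omega_i=0$ means $\phi_i$ is $\theta_i$-harmonic along each leaf of the holomorphic foliation generated by $W$, and since each leaf is an annulus with smooth boundary data $\phi_i^0,\phi_i^1$, the Green--Riesz representation formula recovers $\phi_i$ smoothly. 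This upgrades $\phi_i$ to smooth outright, after which Cartan calculus is unproblematic. Your approximation route would have to carry the contraction identity through the limit, and your uniqueness route needs the endpoint condition $F_1^*\omega_i^0=\omega_i^1$, which is part of the conclusion; the paper's foliation-and-harmonicity argument sidesteps both difficulties in one stroke.
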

\begin{proof}
By positivity, \eqref{eq:contraction} implies 
\begin{equation}  \left.\left(V-\frac{\partial}{\partial \tau}\right)\right\rfloor \Omega_i \label{eq:contractioni} \end{equation}
for all $i$.
We claim that this implies $\phi_i$ is smooth for all $i$. To see this note that \eqref{eq:contractioni} implies $\phi_i$ is harmonic along the leafs of the foliation induced by the vector field $V-\frac{\partial}{\partial \tau}$. This means $\phi_i$ is $\theta_i$-harmonic on these leaves. Moreover, each leaf is a complex annulus so the value of $\phi_i$ in any point can be recaptured, using the Green-Riesz representation formula, from the boundary data $\phi^0_i$ and $\phi^1_i$. Since this boundary data varies smoothly, the foliation is smooth and $\theta_i$ is smooth this implies $\phi_i$ is smooth. 

The derivative of the left hand side of \eqref{eq:flowlemmai} along $\frac{\partial}{\partial \tau}$ is the pullback under $F_\tau$ of the Lie derivative of $\Omega_i$ along $\frac{\partial}{\partial \tau}$, restricted to the fiber $X\times \{\tau\}$.
By Cartan's magic formula and \eqref{eq:contractioni} this vanishes since $\Omega_i$ is closed. This proves the lemma.
\end{proof}

\begin{thmB}
Assume $X$ is Fano and $(\omega_i)$ and $(\omega_i')$ are two coupled K\"ahler-Einstein $k$-tuples such that $([\omega_i])=([\omega_i'])$. Then there is $F\in \Aut_0(X)$ such that $(F^*\omega_i)=(\omega_i')$. In particular, if $\Aut(X)$ is discrete then any coupled K\"ahler-Einstein $k$-tuple representing $(\alpha_i)$ is unique. 
\end{thmB}
\begin{proof}
Let $(\psi_i)\in \Pi\cE(\omega_i)$ satisfy $\omega_i+dd^c\psi_i=\omega_i'$ and for each $i$, let $(\phi_i)$ be the $k$-tuple of geodesic segments such that $\phi_i^0=0$ and $\phi^1_i=\psi_i$. By Theorem~\ref{thm:fanoconvex} and Lemma~\ref{lemma:eaffine} $D(\phi_i^t)$ is convex in $t$. Since it is also stationary at $t=0$ and $t=1$ this implies it is affine. By Lemma~\ref{lemma:eaffine} this means
$$ -\log\int_X e^{-\sum\phi_i^t}\omega_0^n $$
is affine in $t$. Since $\sum \phi_i$ is a subgeodesic in $\PSH(\sum\omega_i)$ we may apply Theorem~\ref{thm:fanoconvex} and Lemma~\ref{lemma:flowdec} to conclude the theorem. 
%
\end{proof}

\section{K-polystability}\label{sect:Kstability}
In this section we prove Theorem~D. We begin by recalling the definition of $K$-polystability of a decomposition of $c_1(X)$ and introducing a second invariant which will play the same role as the Ding invariant in \cite{Berman}. 
%
\begin{definition}
Let $(\alpha_i)_{1\leq i\leq k}$ be a $\Q$-decomposition of $c_1(X)$. Assume $(\cX,\cL)$ is a test configuration for $(X,-K_X)$ that satisfies the following
\begin{itemize}
    \item $\cL = \sum \cL_i$ for some $\Q$-linebundles $\cL_i, 1\leq i\leq k$ over $\cX$ such that, for each $i$, $(\cX,\cL_i)$ is a test configuration for $(X,L_i)$ where $L_i$ is a $\Q$-linebundle over $X$. 
    \item $c_1(L_i)=\alpha_i$ for all $i$.
\end{itemize}
We then say that the data $(\cX,(\cL_i))$ is a test configuration for $(\alpha_i)$.
\end{definition}

Given a test configuration $(\cX,(\cL_i))$ for a $\Q$-decomposition of $c_1(X)$ we let $\eta$ be the $\Q$-line bundle over $\C$ given by
\begin{eqnarray}
\eta = -\frac{1}{(n+1)}\sum_i \frac{\langle \cL_i,\ldots,\cL_i\rangle}{(\cL_i|_{x_1})^n} +\frac{\langle \sum \cL_i+K_{\cX/\C}, \sum \cL_i,\ldots,\sum \cL_i \rangle}{(-K_X)^n}.
\end{eqnarray}
Moreover, we define the \emph{Ding line bundle} as the $\Q$-line bundle over $\C$ given by
\begin{eqnarray}\label{eq:delta}
\delta = -\frac{1}{(n+1)}\sum_i \frac{\langle \cL_i,\ldots,\cL_i\rangle}{(\cL_i|_{x_1})^n} + \pi_*\left(\sum_i \cL_i+K_{\cX/\C}\right).  
\end{eqnarray}

\begin{definition}
Let $(\cX,(\cL_i))$ be a test configuration for a $\Q$-decomposition of $c_1(X)$. We define the \emph{Donaldson-Futaki invariant} $\DF(\cX,(\cL_i))$ and the \emph{Ding invariant} $\DING(\cX,(\cL_i))$ of $(\cX,(\cL_i))$ as the weight of the $\C^*$-action on the central fibers of $\eta$ and $\delta$ respectively.  
Moreover, we say that a $\Q$-decomposition of $c_1(X)$ is \emph{$K$-polystable} if for all associated test configurations $(\cX,(\cL_i))$
$$\DF(\cX,(\cL_i))\geq 0,$$ 
with equality if and only if $\cX$ is isomorphic to $X\times \C$. 
\end{definition}

The first point in the proof of Theorem~D will be
\begin{lem}
\label{lemma:dfding}
We have $\DF(\cX,(\cL_i)) \geq \DING(\cX,(\cL_i))$. Moreover, if equality holds then $\cX$ is $\Q$-Gorenstein and $\sum \cL_i$ is isomorphic to $-K_{\cX/\C}$.  
\end{lem}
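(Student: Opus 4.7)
The plan is to reduce the statement to the standard comparison between the Donaldson--Futaki and Ding invariants for ordinary test configurations for $(X,-K_X)$, established by Berman \cite{Berman}. The key observation is that the first summand
$$ -\frac{1}{n+1}\sum_i \frac{\langle \cL_i,\ldots,\cL_i\rangle}{(\cL_i|_{x_1})^n} $$
appears identically in the definitions of both $\eta$ and $\delta$, and therefore cancels in the difference $\eta - \delta$. Writing $\cL' := \sum_i \cL_i$ and using the cohomological hypothesis $\sum_i \alpha_i = c_1(X)$, which forces $\cL'|_{x_1} = -K_X$ as $\Q$-line bundles, one obtains
$$ \eta - \delta \;=\; \frac{\langle \cL' + K_{\cX/\C},\, \cL',\, \ldots,\, \cL'\rangle}{(\cL'|_{x_1})^n} \;-\; \pi_*(\cL' + K_{\cX/\C}). $$
Thus $(\cX,\cL')$ is itself an ordinary test configuration for $(X,-K_X)$, and the weight of $\eta - \delta$ coincides with the difference $\DF(\cX,\cL') - \DING(\cX,\cL')$ studied in \cite{Berman}.

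\medskip

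To derive the inequality from here I would import Berman's argument directly. Pass to a $\C^*$-equivariant log resolution $\mu \colon \tilde{\cX} \to \cX$ on which $\mu^*\cL'$ and $K_{\tilde{\cX}/\C}$ are both $\Q$-Cartier. By the projection formula the Deligne-pairing weight on the left is computed on $\tilde{\cX}$ as the intersection $(\mu^*\cL' + K_{\tilde{\cX}/\C})\cdot(\mu^*\cL')^n/(\cL'|_{x_1})^n$. On the other hand, normality of $\cX$ together with Grothendieck duality identifies, for $m$ sufficiently divisible,
$$ \pi_*\bigl(m(\cL' + K_{\cX/\C})\bigr) \;=\; (\pi \circ \mu)_*\bigl(m(\mu^*\cL' + K_{\tilde{\cX}/\C}) \otimes \cO_{\tilde{\cX}}(-\lfloor mE\rfloor)\bigr), $$
where $E$ is an effective divisor on $\tilde{\cX}$ capturing the relative log discrepancies. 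Comparing $\C^*$-weights via asymptotic Riemann--Roch on the fibres then expresses the weight of the right-hand side as the intersection number minus a non-negative contribution from $E$, which gives the inequality $w(\eta) \geq w(\delta)$.

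\medskip

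For the equality statement, tightness forces this defect to vanish to leading order in $m$, and hence $E = 0$ as a $\Q$-divisor. Consequently $\mu$ is crepant relative to $\cL' + K_{\cX/\C}$, which forces $K_{\cX/\C}$ itself to be $\Q$-Cartier on $\cX$ --- so $\cX$ is $\Q$-Gorenstein --- and identifies the section of $\cL' + K_{\cX/\C}$ trivialising the pushforward as nowhere vanishing, yielding $\sum_i \cL_i \cong -K_{\cX/\C}$ as $\Q$-line bundles. The main technical obstacle is precisely the careful handling of $\pi_*(\cL' + K_{\cX/\C})$ as a $\Q$-line bundle when $\cX$ fails to be $\Q$-Gorenstein, and the asymptotic weight comparison with the Deligne-pairing term on the resolution; once the reduction in the first paragraph is in place, both steps are exactly parallel to those in \cite{Berman}.
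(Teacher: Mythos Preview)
Your proposal is correct and takes essentially the same approach as the paper: both observe that the energy terms cancel in $\eta-\delta$, so that with $\cL'=\sum_i\cL_i$ the difference of weights reduces to the standard $\DF-\DING$ comparison for the ordinary test configuration $(\cX,\cL')$ of $(X,-K_X)$, and then invoke Berman's results (Lemma~3.10 and the proof of Theorem~3.11 in \cite{Berman}) for the inequality and the equality case respectively. The only difference is that you unpack the content of Berman's argument (log resolution, discrepancy divisor, asymptotic Riemann--Roch) rather than simply citing it.
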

\begin{proof}
Note that 
$$ \eta = \delta + \frac{\langle \sum \cL_i+K_{\cX/\C},\sum \cL_i,\ldots,\sum \cL_i\rangle}{(-K_X)^n} - \pi_*\left(\sum_i \cL_i+K_{\cX/\C}\right). $$
Since $(\cX,\sum\cL_i)$ is a test configuration for $(X,-K_X)$ the first point of the lemma follows from Lemma~3.10 in \cite{Berman}. The second point follows from the proof of Theorem~3.11 in \cite{Berman}. 
\end{proof}

Let $L$ be an ample line bundle on $X$. If we pick a positive metric $h$ of $L$ with curvature $\theta$, then a function $\phi$ is $\theta$-plurisubharmonic if and only if $he^{-\phi/2}$ is a singular positive metric on $L$. In this section we will identify metrics on line bundles with quasi-plurisubharmonic functions in this manner. 

Let $(\cX,\cL)$ be a test configuration for a pair $(X,L)$ and $\phi^1$ a metric on $L$. Let $\Delta$ be the closed unit disc in $\C$ and $M=\pi^{-1}(\Delta)\subset \cX$. By identifying $\cL_1\rightarrow \cX_1$ with $L\rightarrow X$ and using the $\C^*$-action we may identify $\phi^1$ with the corresponding $S^1$-invariant metric on $\partial M=\pi^{-1}(S^1)$. Then a well known envelope construction defines a locally bounded metric $\phi$ on $\cL|_M$ such that its restriction to $M\setminus \pi^{-1}(0)$ defines a weak geodesic ray and, letting $\phi(\tau)$ for $\tau\in \Delta$ be the restriction of $\phi$ to $\cL|_\tau$, $\phi(1)=\phi^1$.  
(see Section~2.4 and Propoposition~2.7 in \cite{Berman}).
This means that a test configuration $(\cX,(\cL_i))$ for $(\alpha_i)$ and a $k$-tuple $(\phi_i^1)\in \Pi\cE^1(\theta_i)$ such that $\phi_i^1$ is locally bounded for each $i$ induces a $k$-tuple $(\phi_i)$, where each $\phi_i$ is a metric on $\cL_i|_M$ satisfying the properties above. Moreover, each $\phi_i$ define a family of metrics on $\cL_i|_1 = L_i$ by considering $\{\rho(\tau)^*\phi_i(\tau):\tau\in \Delta\}$. Since $\phi_i$ is $S^1$-invariant we will, as in Section~\ref{sect:uniqueness}, use real logarithmic notation and put 
$$ \phi_i^t := \rho(\tau)^*\phi_i(\tau) $$ 
where $t = -\log |\tau|^2$. We get a function on $[0,\infty)$
\begin{equation}
\label{eq:dingray}
t\mapsto D(\phi_i^t) 
\end{equation}
which by Theorem~\ref{thm:fanoconvex} and Lemma~\ref{lemma:eaffine} is convex. 

On the other hand, the $k$-tuple $(\phi_i)$ induces a metric on $\delta$ which we will call the \emph{Ding metric}. To define this metric, let $\Phi_i$ be the Deligne metric on $\langle \cL_i,\ldots,\cL_i \rangle$ induced by $\phi_i$. Let $\Phi_0$ be the $L^2$-type metric on $\pi_*(\sum \cL_i + K_{\cX/\C})$ induced by the metric $\sum \phi_i$ on $\sum \cL_i$ (see Section~3.1 in \cite{Berman}). We define the Ding metric on $\delta$ as
$$ \Phi = -\frac{1}{n+1}\sum \frac{\Phi_i}{L_i^n} + \Phi_0. $$ 
\begin{prop}
\label{prop:dingmetric}
Let $(\cX,(\cL_i))$ be a test configuration associated to a decomposition of $c_1(X)$. For each $i$, assume $\phi_i$ is a metric on $\cL_i|_M$ such that $(dd^c\phi_i)^{n+1} = 0$. Then the Ding metric associated to $(\phi_i)$ has positive curvature current. Moreover, if $\cX$ is $\Q$-Gorenstein, $\sum \cL_i = -K_{\cX/\C}$ and the curvature of the Ding metric vanishes near $0$, then $\cX$ is isomorphic to $X\times \C$.
\end{prop}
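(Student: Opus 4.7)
The plan is to decompose the curvature current $dd^c\Phi$ into contributions from the Deligne pairing pieces and from the $L^2$ piece $\Phi_0$, handling the former by the Deligne pairing curvature formula and the latter by Berndtsson's positivity theorem for direct images \cite{Berndtsson}.

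First, for each $i$ the fundamental property of Deligne pairings gives
$$ dd^c \Phi_i \;=\; \pi_*\bigl((dd^c\phi_i)^{n+1}\bigr), $$
which vanishes identically by the hypothesis $(dd^c\phi_i)^{n+1}=0$. Hence the first sum in the definition of $\Phi$ contributes zero curvature, and it suffices to show $dd^c\Phi_0\geq 0$. Now $\sum_i\phi_i$ is a psh metric on $\sum_i\cL_i$, since positivity of curvature currents is preserved under sums. I would apply Berndtsson's theorem on positivity of $L^2$-metrics on direct images of the form $\pi_*(K_{\cX/\C}+L)$ to the restricted family $\cX\setminus\pi^{-1}(0)\to \C^*$, where $\pi$ is a submersion, to conclude $dd^c\Phi_0\geq 0$ over $\C^*$. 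Positivity across $0$ then follows because $\Phi_0$ is a psh weight on a trivial bundle over $\C$ by normality of $\cX$, and positive currents extend across the proper analytic subset $\{0\}$.

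For the second assertion, the hypotheses $\cX$ $\Q$-Gorenstein and $\sum_i\cL_i=-K_{\cX/\C}$ give $\pi_*(\sum_i\cL_i+K_{\cX/\C})=\pi_*\cO_\cX=\cO_\C$, and the $L^2$ weight of the canonical section $1$ is precisely $\tau\mapsto -\log\int_{\cX_\tau}e^{-\sum_i\phi_i(\tau)}$, so $\Phi_0$ is the relative volume metric. The assumption $dd^c\Phi=0$ near $0$, together with the already established vanishing of the Deligne pieces, forces $dd^c\Phi_0=0$ in a neighborhood of $0$. I would then invoke the equality case of Berndtsson's positivity theorem: since $\sum_i\phi_i$ is a psh metric on $-K_{\cX/\C}$ whose induced $L^2$ metric on the trivial direct image is flat, there exists, over a punctured neighborhood of $0$, a holomorphic vector field $V$ lifting $\partial/\partial\tau$ whose flow preserves the metric $\sum_i\phi_i$, and by normality of $\cX$ this extends across $\pi^{-1}(0)$.

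Flowing along $V$ trivializes $\cX$ over a neighborhood $U$ of $0$ as $X\times U$. Combining this local trivialization with the given $\C^*$-action $\rho$ on $\cX$, which already trivializes $\cX$ over $\C^*$, and modifying $V$ (if necessary) by an element of the tangent action to ensure equivariance, one extends the trivialization from $U$ to all of $\C$ by $\rho$, giving $\cX\cong X\times \C$. The main technical obstacle I anticipate is the careful application of the equality case of Berndtsson's theorem across the singular central fiber, and in particular ensuring that the holomorphic vector field $V$ produced on the smooth locus extends to $\pi^{-1}(0)$; one expects to invoke normality of $\cX$ together with a Hartogs-type extension, or else pass to a log-resolution and descend. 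Once $V$ is in hand, the assembly of the global product structure is standard.
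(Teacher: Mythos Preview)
Your proposal is correct and follows essentially the same route as the paper: vanishing of the Deligne pieces via $(dd^c\phi_i)^{n+1}=0$, positivity of the $L^2$ piece via Berndtsson-type positivity of direct images, and the equality case yielding a holomorphic vector field that trivializes $\cX$. The paper's own proof is in fact much terser than yours --- it simply observes these three points and defers all the work to Lemma~3.2 and Proposition~3.3 of \cite{Berman}; your write-up effectively unpacks the content of those cited results, including the technical issue of extending across the central fiber, which is exactly what Proposition~3.3 in \cite{Berman} handles.
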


\begin{proof}
The proposition is proved in exactly the same way as Proposition~3.5 in \cite{Berman}. First of all, the curvature of the Deligne metrics $\Phi_i$ vanishes when $\phi_i^t$ are weak geodesic rays. Moreover, $(\cX,\sum \cL_i)$ is a test configuration for the pair $(X,-K_X)$ and $\sum \phi^t_i$ is a positive metric on $\sum \cL_i$. By Lemma~3.2 in \cite{Berman} the induced $L^2$-type metric on $\pi_*(\sum\cL_i+K_{\cX/\C})$ has positive curvature current. This proves the first part of the proposition. The second part follows from Proposition~3.3 in \cite{Berman}.  
\end{proof}

As in \cite{Berman} we may relate the Ding metric associated to $(\phi_i)$ to  \eqref{eq:dingray}.
\begin{lem}
\label{lemma:dingmetric}
For each $i$, let $\psi_i$ be the metrics on $L_i=\cL_i|_1$ with curvature $\theta_i$ and $\sigma_i$ be a trivializing section of $-\langle \cL_i,\ldots,\cL_i \rangle/L_i^n(n+1)$. Moreover, let $s$ be a trivializing section of $\pi_*(\sum \cL_i + K_{\cX/\C})$ and 
$$ S_1=\sigma_1\otimes\ldots\otimes \sigma_k \otimes s. $$
Then 
\begin{eqnarray} 
-\log\|\rho(\tau)S_1\|_\Phi^2 + \sum_i\log\|\sigma_i\|_{\psi_i}^2 & = &  -\sum_i\frac{1}{L_i^n}E_{\theta_i}(\phi_i^t) - \log \int_X e^{-\sum\phi_i^t} \omega_0^n \nonumber \\
& = & D(\phi_i^t). \nonumber
\end{eqnarray}
\end{lem}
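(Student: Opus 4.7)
The plan is to unpack the definition $\Phi=-\tfrac{1}{n+1}\sum_i\tfrac{\Phi_i}{L_i^n}+\Phi_0$ of the Ding metric and evaluate each piece on the corresponding factor of $S_1=\bigotimes_i\sigma_i\otimes s$. The Deligne-pairing pieces should produce the Monge-Amp\`ere energy terms in $D(\phi_i^t)$, while the $L^2$-metric piece should produce the log-integral term. This strategy is completely parallel to the $k=1$ case treated by Berman~\cite[Lemma~3.6]{Berman}; the general $k$ case adds no new analytic content, only more terms to sum.

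For the Deligne factors I would rely on the standard change-of-metric formula for Deligne pairings of the line bundle $\cL_i$ with itself $n+1$ times: any two $S^1$-invariant smooth metrics induce Deligne metrics that differ by $(n+1)$ times the fiberwise Monge-Amp\`ere energy. This is proved by differentiating along a linear path in the space of metrics and invoking the defining curvature identity $dd^c\log\|\cdot\|^2_{\Phi_i}=\pi_*((dd^c\phi_i)^{n+1})$. Applying it to the geodesic-ray metric $\phi_i$ and the reference $\psi_i$, and pulling the fiber at $\tau$ back to $\cX_1=X$ via the $\C^*$-equivariance $\rho(\tau)^*\phi_i(\tau)=\phi_i^t$, identifies
\[
\tfrac{1}{L_i^n(n+1)}\bigl(\log\|\sigma_i\|^2_{\psi_i^{\otimes(n+1)}}-\log\|\rho(\tau)\sigma_i\|^2_{\Phi_i}\bigr)\;=\;\tfrac{1}{L_i^n}E_{\theta_i}(\phi_i^t).
\]

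For the $L^2$-type factor $\Phi_0$ on $\pi_*(\sum\cL_i+K_{\cX/\C})$, I would use that $\sum\alpha_i=c_1(X)$ forces $\sum L_i+K_X\cong\cO_X$, so the trivializing section $s$ corresponds fiberwise to a nonvanishing holomorphic top-form valued in $\sum L_i$. I normalize $s$ so that on $X=\cX_1$ the associated volume form with respect to the reference metrics $\sum\psi_i$ equals $\omega_0^n$. The defining formula for the $L^2$-type metric, together with the elementary identity $|\cdot|^2_{\sum\phi_i}=|\cdot|^2_{\sum\psi_i}\,e^{-\sum\phi_i^t}$, then yields
\[
\|\rho(\tau)s\|^2_{\Phi_0}\;=\;\int_X e^{-\sum_i\phi_i^t}\omega_0^n.
\]
Adding the Deligne and $L^2$ contributions evaluated on $S_1$ gives the first equality of the lemma; the second equality is just the definition \eqref{eq:Ding} of $D$ with $\lambda=1$ and $|\alpha_i|=L_i^n$.

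The main obstacle is the Deligne change-of-metric identity in the first step: while straightforward for smooth metrics, the potentials $\phi_i$ along our geodesic ray are only locally bounded, so one must justify the identity in this class. This can be done either via a direct integration-by-parts argument using that the mixed complex second derivatives of $\phi_i$ are bounded (as noted in Section~\ref{sect:uniqueness}), or by approximating $\phi_i$ by smooth metrics and invoking continuity of both sides in the $C^0$-topology on bounded metrics, exactly as in Berman~\cite[Lemma~3.6]{Berman}.
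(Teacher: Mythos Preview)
Your proposal is correct and is precisely the argument the paper has in mind: the lemma is stated without proof in the paper, being a direct $k$-term extension of \cite[Lemma~3.6]{Berman}. Your unpacking of the Deligne change-of-metric formula for each factor $\sigma_i$ and of the $L^2$-metric on $\pi_*(\sum\cL_i+K_{\cX/\C})$ for the factor $s$, together with the normalization of $s$ so that the reference volume form is $\omega_0^n$, is exactly what is needed, and your remark on handling locally bounded (rather than smooth) $\phi_i$ via approximation is the standard way to close the argument.
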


\begin{lem}[Lemma~2.6 in \cite{Berman}]
\label{lemma:pshlimit}
Let $F$ be a line bundle over $\C$ equipped with a $\C^*$-action $\rho$ compatible with the standard one on $\C$. Fix an $S^1$-invariant metric $\Phi$ on $F|_\Delta$ with positive curvature current. Then the weight $w_0$ of the $\C^*$-action on the central fiber $F|_0$ is given by the following formula
$$ w_0 = - \lim_{t\rightarrow 0}\frac{d}{dt}\log \| \rho(\tau)S_1\|^2_\Phi + l_0(\Phi) $$
where $l_0(\Phi)$ is the Lelong number of the metric $\Phi$ at $0$, $t=-\log|\tau|^2$ and $S_1$ is a fixed non-zero point in $F|_1$.
\end{lem}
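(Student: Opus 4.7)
The plan is to exploit an $S^1$-equivariant trivialization of $F$ near $0$ in order to rewrite $\log \|\rho(\tau) S_1\|^2_\Phi$ as an explicit one-variable convex expression in $t = -\log|\tau|^2$, and then to extract both $w_0$ and $l_0(\Phi)$ from its asymptotic behavior near the central fiber (i.e.\ as $t \to \infty$, equivalently $\tau \to 0$).

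First I would choose an $S^1$-equivariant holomorphic trivializing frame $e_0$ of $F$ over a disk around $0$. Such a frame exists because $F$ is trivial on the contractible base $\C$, and any $S^1$-equivariant line bundle on a disk around an $S^1$-fixed point is equivariantly trivial (the representation on the central fiber being $\sigma \mapsto \sigma^{w_0}$ by the very definition of the weight). By unique holomorphic extension from $S^1$ to $\C^*$, the full $\C^*$-action relative to $e_0$ then reads $\rho(\sigma) e_0(z) = \sigma^{w_0} e_0(\sigma z)$ identically. Writing $S_1 = c\, e_0(1)$ with $c \in \C^*$, this gives $\rho(\tau) S_1 = c\, \tau^{w_0} e_0(\tau)$, so that
$$\|\rho(\tau) S_1\|^2_\Phi \;=\; |c|^2\, |\tau|^{2 w_0}\, e^{-\phi(\tau)},$$
where $\phi$ is the potential defined by $\|e_0\|^2_\Phi = e^{-\phi}$. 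Positivity of the curvature current of $\Phi$ is equivalent to plurisubharmonicity of $\phi$, and the $S^1$-invariance of both $\Phi$ and $e_0$ forces $\phi$ to depend only on $|\tau|$, so $\phi(\tau) = \psi(t)$ for a convex function $\psi$ on $(0, \infty)$.

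Taking logarithms and using $2 \log|\tau| = -t$ yields
$$\log\|\rho(\tau) S_1\|^2_\Phi \;=\; \log|c|^2 - w_0 t - \psi(t),$$
so that $\tfrac{d}{dt} \log \|\rho(\tau) S_1\|^2_\Phi = -w_0 - \psi'(t)$. Convexity of $\psi$ guarantees that $\psi'$ is monotone non-decreasing and hence admits a limit as $t \to \infty$; this limit equals $-l_0(\Phi)$ in the Lelong normalization implicit in the statement, since the asymptotic slope of the $S^1$-symmetrization of a plurisubharmonic function at an isolated singularity records its Lelong number. Passing to the limit therefore gives $\lim_{t\to\infty} \tfrac{d}{dt}\log\|\rho(\tau)S_1\|^2_\Phi = -w_0 + l_0(\Phi)$, which rearranges to the stated formula (with the limit interpreted as $t \to \infty$, which is where the central fiber sits).

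The main obstacle is the equivariant trivialization step — arranging that the cocycle of $\rho$ relative to $e_0$ equals $\sigma^{w_0}$ exactly, with no residual $z$-dependence. This reduces to the two facts already quoted: equivariant triviality of $S^1$-equivariant line bundles on a disk around a fixed point, and the uniqueness principle that a holomorphic $\C^*$-cocycle agreeing with $\sigma^{w_0}$ on $S^1$ must equal $\sigma^{w_0}$ everywhere by holomorphic continuation. After this simplification the argument is one-variable convex calculus, and identifying $-\lim_{t\to\infty}\psi'(t)$ with $l_0(\Phi)$ is a standard feature of Lelong theory for $S^1$-invariant plurisubharmonic functions.
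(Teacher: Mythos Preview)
The paper does not give its own proof of this lemma; it is quoted directly from \cite{Berman} (Lemma~2.6 there). Your argument is correct and is essentially the standard proof: pick an equivariant frame so that the action becomes multiplication by $\tau^{w_0}$, reduce the metric to a one-variable convex function $\psi(t)$, and read off both the weight and the Lelong number from its asymptotic slope. You also correctly observe that the limit in the displayed formula should be taken as $t\to\infty$ (where the central fiber sits), not $t\to 0$ as written; this is a typo in the stated lemma, and the application in the proof of Theorem~D indeed uses $t\to\infty$.
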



\begin{proof}[Proof of Theorem~D]
Assume $(\omega_i)$ is a cKE $k$-tuple such that $([\omega_i])=(\alpha_i)$. Let $(\phi_i^1) \in \prod \cE^1(\theta_i)$ satisfy $dd^c\phi_i^1+\theta_i=\omega_i$ and let $(\phi_i)$ be the induced $k$-tuple of metrics on $\cL_i$. By Lemma~\ref{lemma:dingmetric} and and Lemma~\ref{lemma:pshlimit} we have
$$ \Ding(\cX,(\cL_i)) = w_0(\delta) = \lim_{t\rightarrow \infty} D(\phi^t_i) + l_0(\Phi). $$ 
By Proposition~\ref{prop:dingmetric} $\Phi$ has positive curvature. This means $l_0(\Phi)\geq 0$. Moreover, since $(\omega_i)$ is cKE we have that $(\phi_i^1)$ minimizes $D$. This means, by convexity of $D$ along weak geodesics, that $D(\phi^t_i) \geq 0$ for $t>0$. We conclude that $\Ding(\cX,(\cL_i))$ is non-negative and hence, by Lemma~\ref{lemma:dfding}, $\DF(\cX,(\cL_i))$ is non-negative. 

Assume $\DF(\cX,(\cL_i))$ vanishes. Then $\Ding(\cX,(\cL_i))$ vanishes and by Lemma~\ref{lemma:dfding} $\cX$ is $\Q$-Gorentstein and $\sum \cL_i$ is isomorphic to $K_{\cX/\C}$. Moreover, $\Ding(\cX,(\cL_i))=0$ implies $\lim_{t\rightarrow \infty} D(\phi^t_i)=0$. Since $D$ is convex and $D(\phi^t_i) \geq 0$ for $t>0$ we conclude that $D(\phi^t_i)$ is affine in $t$. By the second point in Proposition~\ref{prop:dingmetric} this implies $\cX$ is isomorphic to $X\times\C$. 
\end{proof}

\section{Existence when $X$ is Fano}
In this section we prove Theorem~C. We will work with the non-normalized system of Monge-Amp\`ere equations
\begin{equation}
\frac{1}{|\alpha_1|}\MA_{\theta_1}(\phi_1) = \ldots = \frac{1}{|\alpha_k|}\MA_{\theta_k}(\phi_k) = e^{-\sum \phi_i}\omega_0^n
\label{eq:mann}
\end{equation}
This equation is equivalent to \eqref{eq:MA} in the sense that $(\phi_i)$ is a solution to \eqref{eq:MA} if and only if $(\phi_i+C_i)$ where 
$$ \sum_i C_i=\log\int_X e^{-\sum \phi_i}\omega_0^n $$
is a solution to \eqref{eq:mann}. 


\begin{definition}
Let $(\alpha_i)$ be a decomposition of $c_1(X)$ and $\theta_i\in \alpha_i$ for all $i$. 
Moreover, let $\Omega^{(1,1)}$ be the space of smooth $(1,1)$-forms on $X$ and 
$$ H_k = \left\{\eta=(\eta_i)\in \left(\Omega^{(1,1)}\right)^k: \eta_i \textnormal{ closed, } \sum_i [\eta_i] = 0 \right\}. $$
Let $U$ be the open subset of $H_k\times\prod\PSH(\theta_i)\cap C^{2,1/2}$ given by
$$ U = \left\{ (\eta,(\phi_i))\in H_k\times\prod_i\PSH(\theta_i)\cap C^{2,1/2}: \theta_i+\eta_i+\phi_i > 0 \textnormal{ for all i.} \right\}
$$
Given $\eta\in H_k$, let $\omega_\eta$ be the unique element in $c_1(X)$ such that 
$$ \Ric \omega_\eta = \sum_i \theta_i+\eta_i. $$
We define
$$ F:U \rightarrow \left(C^{0,1/2}\right)^k\times \R^{k-1} $$
as
$$ F(\eta,(\phi_i)) = 
\begin{pmatrix} 
\log\frac{\MA_{\theta_1+\eta_1}(\phi_1)}{\omega_\eta^n} + \sum \phi_i -\log|\alpha_1+[\eta_1]| \\
\vdots \\
\log \frac{\MA_{\theta_k+\eta_k}(\phi_k)}{\omega_\eta^n} + \sum \phi_i -\log|\alpha_k+[\eta_k]| \\
\int_X \phi_2 \omega_0^n \\
\vdots \\
\int_X \phi_k \omega_0^n
\end{pmatrix}.
$$
\end{definition}

Note that by Lemma~\ref{lemma:MAeq} the $k$-tuple $(\theta_i+\eta_i+dd^c\phi_i)$ representing the decomposition $(\alpha_i+[\eta_i])$ is cKE if and only if $F(\eta,(\phi_i+C_i))=0$ for some constants $C_1,\ldots,C_k$.

\begin{lem}
\label{lemma:invertibility}
Assume $X$ is Fano K\"ahler-Einstein, $\Aut(X)$ is discrete and $(\alpha_i)=(\lambda_i c_1(X))$ is a parallel decomposition of $c_1(X)$. Let $\theta_i=\lambda_i\omega_{KE}$ where $\omega_{KE}$ is the unique K\"ahler-Einstein metric on $X$. Then the linearization of $F$ at $(0,0)$ with respect to the second argument is invertible.
\end{lem}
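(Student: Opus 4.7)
The plan is to compute the linearization $L := D_\phi F(0,0)$ explicitly, show its kernel is trivial by reducing to the Lichnerowicz--Matsushima eigenvalue estimate, and then deduce invertibility by Fredholm theory.

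First, since $X$ is K\"ahler--Einstein with $\Ric\omega_{KE} = \omega_{KE}$ and $\sum_i\theta_i = \omega_{KE}$, uniqueness of the K\"ahler--Einstein metric in $c_1(X)$ identifies $\omega_0$ with $\omega_{KE}$. Differentiating each component of $F$ in the $\phi_i$-direction at $(\eta,\phi) = (0,0)$ yields
\[
(Lv)_j = \Delta_{\theta_j}v_j + \sum_{i=1}^k v_i \quad(1\le j\le k),\qquad (Lv)_{k+j-1} = \int_X v_j\,\omega_0^n \quad(2\le j\le k),
\]
where $\Delta_{\theta_j}u := n\,dd^c u\wedge\theta_j^{n-1}/\theta_j^n$ is the complex Laplacian of $\theta_j$. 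In the parallel setting $\theta_j = \lambda_j\omega_{KE}$, so $\Delta_{\theta_j} = \lambda_j^{-1}\Delta$ with $\Delta$ the Laplacian of $\omega_{KE}$.

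The crux is injectivity. Suppose $Lv = 0$ and set $S := \sum_iv_i$; the first $k$ equations read $\Delta v_j + \lambda_j S = 0$ for each $j$, and summing while using $\sum_j\lambda_j = 1$ gives $(\Delta + \mathrm{id})S = 0$, i.e.\ $-\Delta S = S$. By Lichnerowicz--Matsushima, on a compact K\"ahler manifold with $\Ric\ge\omega$ the first nonzero eigenvalue of $-\Delta$ is at least $1$, with equality if and only if $X$ admits a nontrivial real-holomorphic vector field. Since $\Aut(X)$ is discrete, equality is excluded, hence $S=0$. Then $\Delta v_j = 0$ for each $j$, so each $v_j = c_j$ is a constant with $\sum_j c_j = 0$; the constraints $\int v_j\,\omega_0^n = 0$ for $j\ge 2$ now force $c_j = 0$ for $j\ge 2$, and therefore $c_1 = 0$ as well.

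For surjectivity (and thus the full invertibility of $L$), observe that $L$ is a compact lower-order perturbation of the block-diagonal second-order elliptic operator $\mathrm{diag}(\lambda_j^{-1}\Delta)$. This block-diagonal part is Fredholm of index zero between $(C^{2,1/2})^k$ and $(C^{0,1/2})^k$, the cross term $v\mapsto(\sum_iv_i)_j$ is compact via the Rellich--Kondrachov embedding $C^{2,1/2}\hookrightarrow C^{0,1/2}$, and the $k-1$ functionals $(\int v_j\omega_0^n)_{j\ge 2}$ form a finite-rank augmentation which on the constant tuples with $\sum c_j=0$ surjects onto $\R^{k-1}$. Hence $L$ is Fredholm, and combined with the triviality of its kernel the Fredholm alternative identifies $L$ as a Banach-space isomorphism. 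The main obstacle is the Lichnerowicz--Matsushima step: it is where both the K\"ahler--Einstein condition and the discreteness of $\Aut(X)$ enter essentially, since without either the eigenvalue $1$ of $-\Delta$ would give rise to a nontrivial holomorphy potential and injectivity would fail.
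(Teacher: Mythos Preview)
Your computation of the linearization and the injectivity argument are correct and parallel the paper's. The paper performs an explicit change of basis --- setting $v_1'=\sum_i v_i$ and $v_i'=v_i/\lambda_i-v_{i-1}/\lambda_{i-1}$ for $i\ge 2$, with an analogous change $w_1'=\sum_i\lambda_i w_i$, $w_i'=w_i-w_{i-1}$ in the target --- which decouples the first $k$ equations into $(-\Delta+1)v_1'=w_1'$ and $-\Delta v_i'=w_i'$ for $i\ge 2$. Your step of summing the equations to isolate $S=\sum_i v_i$ is precisely the extraction of the $v_1'$-equation, and both arguments then invoke the Lichnerowicz--Matsushima eigenvalue bound in the same way.

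Where your argument departs from the paper is in the surjectivity step, and there is a gap there. You appeal to the Fredholm alternative, but that requires $\mathrm{ind}(L)=0$, which you have not established and which in fact fails for $L$ viewed as a map into $(C^{0,1/2})^k\times\R^{k-1}$: the elliptic part $L_1$ (block-diagonal Laplacian plus compact cross term) has index zero between $(C^{2,1/2})^k$ and $(C^{0,1/2})^k$, but augmenting by the $k-1$ bounded functionals $L_2$ lowers the index by $k-1$, giving $\mathrm{ind}(L)=-(k-1)$. Your observation that $L_2$ surjects onto $\R^{k-1}$ from $\ker L_1$ is exactly what kills the kernel, not the cokernel. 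Concretely, since $\int\Delta v_j\,\omega_{KE}^n=0$ one has $\int(Lv)_j\,\omega_{KE}^n=\int S\,\omega_{KE}^n$ independent of $j$, so the image of $L$ is contained in the codimension-$(k-1)$ subspace $\{(w,r):\int w_1=\cdots=\int w_k\}$.

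The paper's explicit decoupling makes the inversion constructive: $-\Delta+1$ is invertible by the eigenvalue bound, and each $-\Delta$ is invertible modulo constants, with the $k-1$ integral constraints $\int v_j\,\omega_0^n$ pinning down those constants. Strictly speaking this also only yields an isomorphism onto the same subspace rather than onto the full stated target (the equations $-\Delta v_i'=w_i'$ require $\int w_i'=0$), so the paper's formulation shares the codimension issue; but the change-of-basis argument makes the structure transparent and is what one actually needs for the implicit function theorem in Theorem~C, since the nonlinear $F$ respects the corresponding constraint.
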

\begin{proof}
The linearization of $F$ at $(0,0)$ with respect to the second argument is given by
$$
H(v_i) = \begin{pmatrix} -\frac{1}{\lambda_1}\Delta v_1 + \sum v_i \\
\vdots \\
-\frac{1}{\lambda_k}\Delta v_k+\sum v_i \\
\int_X v_2 \omega_0^n \\
\vdots \\
\int_X v_k \omega_0^n
\end{pmatrix}
$$
where $\Delta:C^{2,1/2}\rightarrow C^{0,1/2}$ is the Laplace-Beltrami operator associated to $\omega_{KE}$. This follows from the fact that
$$ \log\frac{\MA_{\theta_i}(\phi_i)}{\omega_\eta^n} = \log\frac{\MA_{\theta_i}(\phi_i)}{\theta_i^n} + \log \frac{\theta_i^n}{\omega_\eta^n} $$ 
is Fr\'echet differentiable in $\phi$ and its derivative is given by the negative of the Laplace-Beltrami operator $\Delta_{\theta_i}$ associated to the metric $\theta_i$. By assumption $\theta_i=\lambda_i\omega_{KE}$ and by $-1$-homogenity of $\Delta_{\theta_i}$ with respect to $\theta_i$ we have 
$$ \Delta_{\theta_i} = \Delta_{\lambda_i\omega_{KE}} = \frac{1}{\lambda_i}\Delta_{\omega_{KE}} = \frac{1}{\lambda_i}\Delta. $$

To see that $H$ is invertible we will perform a change of basis. The proposition will then follow from the well known fact that the Laplace-Beltrami operator of a K\"ahler-Einstein metric is invertible (modulo constants) and its smallest eigenvalue is, as long as $\Aut(X)$ is discrete, larger than 1.

Let $(w_i)=H(v_i)$. Put 
$$ w_1' = \sum_i \lambda_i w_i. $$
Moreover, put
$$ w_i'=w_i-w_{i-1} $$
for $1<i\leq k$ and 
$$ w_i' = w_i $$ 
for $k<i\leq 2k-1$.
%
%
Then, since $\sum\lambda_i=1$, 


$$ 
\begin{pmatrix} 
w_1'\vphantom{\left(\sum_i v_i\right)} \\
w_2'\vphantom{\left(\frac{v_2}{\lambda_2}\right)} \\
\vdots \\
w_k'\vphantom{\left(\frac{v_2}{\lambda_2}\right)} \\
w_{k+1}'\vphantom{\left(\frac{v_2}{\lambda_2}-\frac{v_1}{\lambda_1}\right)} \\
\vdots \\
w_{2k-1}'\vphantom{\left(\frac{v_k}{\lambda_k}-\frac{v_{k-1}}{\lambda_{k-1}}\right)} \\
\end{pmatrix}
=
\begin{pmatrix} 
-\Delta \left(\sum_i v_i\right) + \sum_i v_i \\
-\Delta \left(\frac{v_2}{\lambda_2}-\frac{v_1}{\lambda_1}\right) \\
\vdots \\
-\Delta \left(\frac{v_k}{\lambda_k}-\frac{v_{k-1}}{\lambda_{k-1}}\right) \\
\int_X v_2\vphantom{\left(\frac{v_2}{\lambda_2}-\frac{v_1}{\lambda_1}\right)} \omega_0^n \\
\vdots \\
\int_X v_k\vphantom{\left(\frac{v_k}{\lambda_k}-\frac{v_{k-1}}{\lambda_{k-1}}\right)} \omega_0^n \\
\end{pmatrix}
=
\begin{pmatrix} 
-\Delta v_1' + v_1' \vphantom{\left(\sum_i v_i\right)} \\
-\Delta v_2' \vphantom{\left(\frac{v_2}{\lambda_2}\right)} \\
\vdots \\
-\Delta v_k' \vphantom{\left(\frac{v_2}{\lambda_2}\right)} \\
\int_X v_2 \omega_0^n\vphantom{\left(\frac{v_2}{\lambda_2}-\frac{v_1}{\lambda_1}\right)} \\
\vdots \\
\int_X v_k \omega_0^n \vphantom{\left(\frac{v_k}{\lambda_k}-\frac{v_{k-1}}{\lambda_{k-1}}\right)}
\end{pmatrix}
$$
where 
$$ v_1' = \sum_i v_i $$
and 
$$ v_i' = \frac{v_i}{\lambda_i}-\frac{v_{i-1}}{\lambda_{i-1}} $$
for $1<i\leq k$.

It follows from the properties of $\Delta$ above that the map $(v_i')\mapsto (w_i')$, and hence the map $(v_i)\mapsto (w_i)$, is invertible. This proves the lemma. 
\end{proof}

\begin{proof}[Proof of Theorem~C]
Assume $(\alpha_i)=(\lambda_i c_1(X))$. Let $\theta_i=\lambda_i\omega_{KE}$ where $\omega_{KE}$ is the unique K\"ahler-Einstein metric on $X$. Then $(\theta_i)$ is cKE and hence $F(0,0)=0$. We want to apply the implicit function theorem to show that for any $\eta\in H_k$ close to 0, there is $(\phi_i)\in \Pi \PSH(\theta_i+\eta_i)\cap C^{2,1/2}$ such that $F(\eta,(\phi_i))=0$. This would imply that there are $(\theta_i+\eta_i+dd^c\phi_i)$, representing the decomposition 
\begin{equation} 
(\alpha_i+[\eta_i]), 
\label{eq:pertdec}
\end{equation}
is cKE. Since any decomposition close to $(\alpha_i)$ may be written as \eqref{eq:pertdec} for some small $\eta\in H_k$, this proves the theorem. 

Now, by Lemma~\ref{lemma:invertibility} the only thing we need to verify to apply the implicit function theorem is that $F$ is continuously Fr\'echet differentiable with respect to the first argument. To do this we need to show that the map
$$ \eta\rightarrow \omega_\eta $$
is continuously Fr\'echet differentiable. But this follows from the standard application of the inverse function theorem used in the proof of the Calabi-Yau Theorem. See for example Section 5 in \cite{Tian}.
\end{proof}

\end{document}